\newcommand{\X}{{\mathfrak X}}
\newcommand{\lr}{\longrightarrow}
\newcommand{\we}{\wedge}
\newcommand{\om}{\omega}
\newcommand{\esp}{\quad\mbox{and}\quad}
\newcommand{\G}{{\mathfrak{g}}}
\newcommand{\h}{{\mathfrak{h}}}
\newcommand{\ad}{{\mathrm{ad}}}
\newcommand{\aff}{{\mathrm{aff}}}
\newcommand{\md}{{\mathrm{d}}}
\newcommand{\al}{\alpha}
\newcommand{\alb}{\overline\al}
\newcommand{\omb}{\overline\om}
\newcommand{\hb}{\overline\h}
\newcommand{\xib}{\overline\xi}
\newcommand{\Gb}{\overline\G}
\newcommand{\be}{\beta}
\newcommand{\R}{\mathbb{R}}
\newcommand{\la}{\lambda}
\newtheorem{theo}{Theorem}[section]
\newtheorem{pr}{Proposition}[section]
\newtheorem{Le}{Lemma}[section]
\newtheorem{co}{Corollary}[section]
\newtheorem{exems}{Examples}
\newtheorem{remark}{Remark}
{%
	\stepcounter{equation}
	\begin{equation*}}{%
	\leqno(\arabic{equation})
	\end{equation*}
}
\title{On cosymplectic Lie Algebras}
\author{S. El bourkadi and  M. W. Mansouri\\Universit\'e Ibn Tofail\\ Facult\'e des Sciences. Laboratoire L.A.G.A\\ K\'enitra-Maroc\\e-mail: mansourimohammed.wadia@uit.ac.ma\\
	said.elbourkadi@uit.ac.ma}
\begin{document}
\maketitle

\begin{abstract}
We give some properties of cosymplectic Lie algebras, we show, in particular, that they support a left symmetric product. We also give some constructions of cosymplectic Lie algebras, as well as a classification in three and five-dimensional cosymplectic Lie algebras.
\end{abstract}

key words:
Cosymplectic structures, Left-symmetric product, Double extensions.\\
AMS Subject Class (2010):  53D15, 22E25.

\section{Introduction}

Cosymplectic manifolds were introduced by Libermann In 1958.  She defined it as: An {\it{ almost cosymplectic structure}} on a  manifold $M$ of odd dimension ($2n+1$) is a pair $(\al,\om)$, where $\al$ is a $1$-form and $\om$ is a $2$-form such that 
$\al\we\om^n$ is a volume form on $M$. The structure is
said to be  cosymplectic if $\al$ and $\om$ are closed. Any almost cosymplectic structure $(\al,\om)$ uniquely determines a smooth vector field
$\xi$ on $M$, called the {\it{Reeb vector field}} of almost cosymplectic manifold $(M,\al,\om)$ and it is completely characterized by the following conditions

\begin{equation} \label{1}
\al(\xi)=1\esp \iota_\xi\om=0,
\end{equation}
where $\iota$ denotes
the inner product.  If we consider the vector bundle morphism  $\varPhi : \X(M) \lr\Omega^1(M)$
defined by
\begin{equation} \label{2}
\varPhi(X)=\iota_X\om+\al(X)\al.
\end{equation}
The condition that $\al\we\om^n$ is a volume form  is equivalent to the condition that $\varPhi$ is a vector
bundle isomorphism.  In this case the Reeb vector is given by $\xi= \varPhi^{-1}(\al)$.

For more details on cosymplectic geometry, we refer the reader to the survey article \cite{C-N-Y} and the references therein.

In this paper, we are interested in Lie groups admit a cosymplectic structure which is invariant under left translations (left invariant). 
Let $G$ be a ($2n+1$)-dimensional real Lie group  and $\G$  the corresponding Lie algebra. If $G$ is endowed with a left invariant differential  $1$-form $\al^+$  and  $2$-form $\om^+$  such that $(\al^+,\om^+)$ is a cosymplectic structure, we will say that  $(G,\al^+,\om^+)$ is a  {\it{ cosymplectic  Lie group}} and that
$(\G,\al,\om)$ is a  {\it{ cosymplectic  Lie algebra}}, where $\al=\al^+(e)$ and $\om=\om^+(e)$, with $e$ is the unit element of $G$.  $(\G,\al,\om)$ is a   cosymplectic  Lie algebra this is equivalent to  
\begin{enumerate}
	\item $\al([x,y])=0$, \qquad\qquad $\forall x, y\in\G$.
	\item $\om([x,y],z)+\om([y,z],x)+\om([z,x],y)=0$,  \qquad $\forall x, y, z\in\G$.
	\item $\al\we\om^n\not=0$.
\end{enumerate}
The Reeb vector field is the unique left invariant vector field $\xi^+$ satisfying $\al(\xi)=1$ and $\iota_\xi\om=0$, where $\xi=\xi^+(e)$ is called the Reeb vector of $(\G,\al,\om)$. Note that from 1. a semi-simple Lie algebra (in particular if $[\G,\G]=\G$) cannot support a cosymplectic structure. See \cite{C-P} for the study of cosymplectic Lie algebras and a characterization.

Recall that a finite-dimensional algebra $(\G,.)$  is called {\it{left-symmetric}} if it satisfies the identity
\begin{equation*}
ass(x,y,z)=ass(y,x,z)\qquad \forall x,y,z \in\G,
\end{equation*}
where $ass(x,y,z)$  denotes the associator $ass(x,y,z)=(x.y ).z-x.(y.z)$.  In this case, the commutator $[x,y]= x.y-y.x$
defines a bracket that makes $\G$  a Lie algebra.
Clearly, any {\it{associative algebra}} product (i.e. $ass(x,y,z)=0$, $\forall x,y,z\in\G$) is  a left symmetric product.

A {\it{symplectic Lie algebra}} $(\G,\om)$ is a real Lie algebra with a skew-symmetric
non-degenerate bilinear form $\om$ such that for any $x$, $y$, $z\in\G$,
\begin{equation}\label{3}
\oint\om([x,y],z)=0,
\end{equation}
this is to say, $\om$ is a non-degenerate $2$-cocycle for the scalar cohomology of $\G$, where $\oint$ denotes summation over the cyclic permutation.

It is known that  (see \cite{C} and \cite{M-R}) the product given by
\begin{equation}\label{4}
\omega(x*y,z)=-\omega(y,[x,z]),\qquad\forall x,y,z\in\G,
\end{equation}

induces a left symmetric algebra structure that satisfies $x*y-y*x=[x,y]$ on $\G$, we say that the left symmetric product is {\it{associated with   the symplectic Lie algebra}} $(\G,\om)$.
Geometrically, this is equivalent to the existence in a symplectic Lie group of {\it{left-invariant affine structure}} (a left-invariant linear connection   with zero torsion and zero curvature).

The paper is organized as follows. In Section $2$, we show that any cosymplectic Lie algebra supports a left symmetric product and we give some properties. In Section $3$, we give some procedures to construct cosymplectic Lie algebras. In particular, we suggest two different constructions of cosymplectic double extension. In the last section we will  give some results in low dimension and we also  give a classification in three and five-dimensional cosymplectic Lie algebras.

\textit{Notations}:  Let $\{e_i\}_{1\leq i\leq n}$  a basis of $\G$, we denote by $\{e^i\}_{1\leq i\leq n}$ its
dual basis on $\G^\ast$ and  $e^{ij}$  the 2-form $e^i\wedge e^j\in\wedge^2\G^*$. Set by  $\langle e \rangle:= span\{e\}$ the one-dimensional trivial Lie algebra.

The software Maple 18$^\circledR$ has been used to check all needed calculations.
\section{Left-symmetric product associated with  cosymplectic Lie algebra}

It is known that any symplectic Lie algebra can be equipped with an affine structure, on the other hand, a contact Lie algebra does not necessarily admit a left symmetric product. Our main result is to show that any cosymplectic Lie algebra supports a left symmetric product. 

In the following, $(\G,\al,\om)$ is a ($2n+1$)-dimensional real cosymplectic Lie algebra with the Reeb vector $\xi$. Therefore, we have an isomorphism
\begin{equation}\label{5}
\begin{array}{rcl}
\varPhi : \G& \lr & \G^* \\
x & \longmapsto & \iota_x\om+\al(x)\al. \\
\end{array}
\end{equation}
Throughout the remainder of this paper posing $\h= \ker\al$ and $\om_{\h}=\om_{|\h\times\h}$.
\begin{Le}
	Let $(\G,\al,\om)$ be a cosymplectic Lie algebra with the Reeb vector $\xi$.
	Then $\h$ is an ideal of $\G$ and $ (\h,\om_{\h}) $ is a symplectic Lie algebra.
\end{Le}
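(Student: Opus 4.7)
The plan is to split the statement into three claims: (i) $\h$ is an ideal, (ii) $\om_\h$ is closed in the Chevalley--Eilenberg sense on $\h$, and (iii) $\om_\h$ is non-degenerate. Each falls out quickly from the axioms of a cosymplectic Lie algebra together with the defining properties of the Reeb vector.

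For (i), the key is condition $1$ of the cosymplectic definition: $\al([x,y])=0$ for every $x,y\in\G$. This says $[\G,\G]\subseteq\ker\al=\h$, which in particular implies $[\G,\h]\subseteq\h$, so $\h$ is an ideal (indeed it contains the derived algebra). Note also that, because $\al(\xi)=1$, the Reeb vector is not in $\h$, so we have the vector space decomposition $\G=\h\oplus\R\xi$, which I will use in step (iii).

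For (ii), the cocycle identity for $\om_\h$ is just the restriction to $\h\times\h\times\h$ of the cocycle identity (condition $2$) satisfied by $\om$ on $\G$; since $\h$ is closed under the bracket by step (i), this restriction makes sense and gives the desired $d\om_\h=0$.

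The only step that requires a little more care is (iii), non-degeneracy of $\om_\h$. Suppose $x\in\h$ satisfies $\om_\h(x,y)=0$ for all $y\in\h$. Using $i_\xi\om=0$, we also have $\om(x,\xi)=-\om(\xi,x)=0$, and combined with the decomposition $\G=\h\oplus\R\xi$ this yields $i_x\om=0$ on all of $\G$. Since $x\in\h$, also $\al(x)=0$, so the formula $\varPhi(x)=i_x\om+\al(x)\al$ gives $\varPhi(x)=0$. The hypothesis that $\al\we\om^n$ is a volume form is equivalent to $\varPhi$ being an isomorphism, so $x=0$. This is the main (though still short) obstacle: without invoking $\varPhi$ one could get stuck trying to show non-degeneracy directly, but the characterization of the volume condition via $\varPhi$ makes it essentially immediate.
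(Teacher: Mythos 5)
Your proof is correct, and steps (i) and (ii) coincide with the paper's. Where you diverge is the non-degeneracy of $\om_{\h}$: the paper takes a basis $\{\xi,e_1,\dots,e_{2n}\}$ adapted to the splitting $\G=\h\oplus\R\xi$ and evaluates directly, $0\neq \al\we\om^n(\xi,e_1,\dots,e_{2n})=\om_{\h}^n(e_1,\dots,e_{2n})$, so that $\om_{\h}^n\neq 0$ on $\h$ and non-degeneracy follows from the standard criterion for a $2$-form on an even-dimensional space. You instead take a putative degenerate vector $x\in\h$, use $i_\xi\om=0$ to upgrade $\om_{\h}(x,\cdot)=0$ to $i_x\om=0$ on all of $\G$, and conclude $\varPhi(x)=0$, hence $x=0$ since $\varPhi$ is an isomorphism. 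Both arguments are valid. The paper's computation is self-contained and needs nothing beyond $\al\we\om^n\neq 0$ and $\al|_{\h}=0$; yours leans on the equivalence between the volume condition and the invertibility of $\varPhi$, which the paper asserts in the introduction but does not prove, and which is itself close in content to the statement being established (the usual proof of that equivalence passes through the non-degeneracy of $\om$ on $\ker\al$). So your route is legitimate within the paper's framework but outsources the real work to that cited equivalence, whereas the paper's one-line evaluation proves it from scratch; on the other hand, your kernel argument generalizes more readily to situations where one has an isomorphism criterion but no convenient adapted basis.
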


\begin{proof} Let $x\in\h$ and $y\in\G$ we have
	\[\al([x,y])=-\md\al(x,y)=0,\]
	then $\h$ is an ideal of $\G$. Now proving that  $ (\h,\om_{\h}) $ is a symplectic Lie algebra, it is clear that $\om_\h$ is 2-cocycle, consider a basis
	$\{\xi,e_1,...,e_{2n}\}$ of $\G$, with $\{e_1,...,e_{2n}\}$ is a basis of $\h$. We have
	\[0\not=\al\we\om^n(\xi,e_1,...,e_{2n})=\om_{\h}^n(e_1,...,e_{2n}).\]
	Then  $ (\h,\om_\h) $ is a symplectic Lie algebra.
\end{proof} 
Denote by $*$ the left-symmetric product associated with the symplectic Lie algebra $(\h,\om_{\h})$ and $ass^*$ its associator (i.e. $ass^*(x,y,z)=(x*y)*z-x*(y*z)$, $\forall x,y,z\in\h$ ).

As in the symplectic framework, the non-degeneration of $\varPhi$ defines a product on $\G$ by
\begin{equation} \label{6}
\varPhi(x.y)(z)=- \varPhi(y)([x,z])\qquad x,y,z\in\G.
\end{equation}
\begin{pr}\label{pr2.1}
	The product defined by $(\ref{6})$, is characterized by 
	\begin{enumerate}
		\item  For $x,\,y\in\h$, we have   
		\[x.y=x*y+\om(x,\ad_{\xi}y)\xi.\]
		\item For $ x\in\G$,  we have 
		\[\xi.x=\ad_\xi x\esp  x.\xi=0.\] 
	\end{enumerate}	
	
\end{pr}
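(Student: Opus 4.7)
The plan is to exploit the fact that $\varPhi$ is a linear isomorphism, so identifying any element of $\G$ amounts to evaluating its image under $\varPhi$ on a basis of $\G$. I will use the basis $\{\xi\}\cup\{e_1,\dots,e_{2n}\}$ of the preceding lemma and repeatedly use the two structural identities $\al(\xi)=1$, $i_\xi\om=0$ together with $\varPhi(\xi)=\al$ and $\varPhi(h)=i_h\om$ for $h\in\h$.

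I would start with the easier item 2. For $x.\xi=0$, I simply substitute $y=\xi$ in $(\ref{6})$: the right-hand side becomes $-\varPhi(\xi)([x,z])=-\al([x,z])$, which vanishes by property 1 of a cosymplectic Lie algebra. Since $\varPhi$ is injective, $x.\xi=0$. For $\xi.x=\ad_\xi x$, I note first that $\al(\ad_\xi x)=\al([\xi,x])=0$, so $\varPhi(\ad_\xi x)(z)=\om(\ad_\xi x, z)$. The identity to verify thus reduces to
\[
\om(\ad_\xi x,z)=-\varPhi(x)([\xi,z])=-\om(x,[\xi,z])-\al(x)\al([\xi,z]),
\]
and the second term vanishes again by property 1. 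The remaining equality $\om([\xi,x],z)+\om(x,[\xi,z])=0$ is precisely the cyclic 2-cocycle identity $(\ref{3})$ applied to $(\xi,x,z)$ after discarding the third term $\om([z,\xi],x)+\om([\xi,x],z)+\om([x,z],\xi)$'s last summand, which vanishes because $\om([x,z],\xi)=-(i_\xi\om)([x,z])=0$.

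For item 1 with $x,y\in\h$, I write $x.y=u+\la\xi$ uniquely with $u\in\h$ and $\la\in\R$, and recover $u$ and $\la$ by pairing $\varPhi(x.y)$ against the two types of basis vectors. Pairing with $z\in\h$, $\al(z)=0$ and $\om(\xi,z)=0$ give $\varPhi(x.y)(z)=\om_\h(u,z)$; the right-hand side of $(\ref{6})$ equals $-\om(y,[x,z])=-\om_\h(y,[x,z])$ since $\h$ is an ideal, and by the definition $(\ref{4})$ of $*$ in the symplectic Lie algebra $(\h,\om_\h)$ this is $\om_\h(x*y,z)$. Non-degeneracy of $\om_\h$ then forces $u=x*y$. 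Pairing with $z=\xi$, the left-hand side collapses to $\la$ because $\om(u,\xi)=0$ and $\al(\xi)=1$, while the right-hand side is $-\om(y,[x,\xi])=\om(y,\ad_\xi x)$.

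The final task, and the only slightly subtle point, is to rewrite $\om(y,\ad_\xi x)$ as $\om(x,\ad_\xi y)$. This comes from applying the 2-cocycle identity to the triple $(x,y,\xi)$, which gives
\[
\om([x,y],\xi)+\om([y,\xi],x)+\om([\xi,x],y)=0;
\]
the first summand vanishes because $i_\xi\om=0$, and rearranging the remaining two yields $\om(x,\ad_\xi y)=\om(y,\ad_\xi x)$. Hence $\la=\om(x,\ad_\xi y)$, completing the identification $x.y=x*y+\om(x,\ad_\xi y)\,\xi$. The main obstacle is bookkeeping rather than depth: every computation reduces to a careful use of $i_\xi\om=0$, $\al\circ[\cdot,\cdot]=0$, and the cocycle identity for $\om$.
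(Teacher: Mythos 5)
Your proof is correct and follows essentially the same route as the paper: evaluate the defining identity $\varPhi(x.y)(z)=-\varPhi(y)([x,z])$ separately on $z\in\h$ and $z=\xi$ to extract the $\h$-component ($x*y$, via the definition of $*$) and the $\xi$-coefficient, and use the $2$-cocycle identity together with $i_\xi\om=0$ and $\al\circ[\cdot,\cdot]=0$ for the remaining identifications, including the symmetry $\om(y,\ad_\xi x)=\om(x,\ad_\xi y)$ which the paper uses implicitly. No gaps.
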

\begin{proof} 
	\begin{enumerate}
		\item For all $x,y\in\h$ and $z\in\G$ the relation $(\ref{6})$ becomes
		\begin{align*}
		\om(x.y,z)+\al(x.y)\al(z)&=-\om(y,[x,z])-\al(y)\al([x,z])\qquad\\
		&=-\om(y,[x,z]).
		\end{align*}
		If $z\in\h$,  we have $\om_{|\h}(x.y,z)=-\om_{|\h}(y,[x,z])$
		and if $z=\xi$,  we have
		\begin{align*}
		\al(x.y)&=-\om(y,[x,\xi])\\
		&=\om(x,[\xi,y]).
		\end{align*}
		This shows 1.
		\item For $x=\xi$ and $y\in\h$, the relation $(\ref{6})$ becomes
		\begin{align*}
		\om(\xi.y,z)+\al(\xi.y)\al(z)&=-\om(y,[\xi,z]).
		\end{align*}
		On the one hand, for $z=\xi$ we obtain $\al(\xi.y)=0,$ then  $\xi.y\in\h$, on the other hand for $z\in\h$, we have 
		\begin{align*}
		\om_{\h}(\xi.y,z)&=-\om_{\h}(y,[\xi,z])\\
		&=\om_{\h}([\xi,y],z),
		\end{align*}
		hence, $\xi.x=\ad_\xi x$, $\forall x\in\G$.
		
		Finally, for $y=\xi$ we also have
		\begin{align*}
		\om(x.\xi,z)+\al(x.\xi)\al(z)&=0,
		\end{align*} 
		then $x.\xi=0$,  $\forall x\in\G$.
	\end{enumerate}
	
\end{proof}
The following lemma shows that $\ad_{\xi}$ is a derivation relatively to the left-symmetric product associated with the symplectic Lie algebra $(\h,\om_{\h})$.
\begin{Le}\label{l2}
	For all $x$, $y\in\h$ we have
	\begin{equation*}
	\ad_{\xi}(x*y)=\ad_{\xi}x*y+x*\ad_{\xi}y.
	\end{equation*}
\end{Le}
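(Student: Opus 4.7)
The plan is to exploit the non-degeneracy of $\om_\h$ and reduce the stated identity to a scalar equality. Since $\h$ is an ideal of $\G$ by the previous lemma, $\ad_\xi$ preserves $\h$, so all the terms $\ad_\xi(x*y)$, $\ad_\xi x * y$, $x * \ad_\xi y$ lie in $\h$; it therefore suffices to check
$$\om_\h(\ad_\xi(x*y),z) \;=\; \om_\h(\ad_\xi x * y + x * \ad_\xi y,\,z),\qquad \forall z\in\h.$$
Both sides will be unpacked using the defining relation (\ref{4}) of $*$ on the symplectic Lie algebra $(\h,\om_\h)$.

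The key intermediate observation, and the only non-routine ingredient of the argument, is that $\ad_\xi$ is skew-adjoint with respect to $\om_\h$:
$$\om_\h(\ad_\xi a,b) + \om_\h(a,\ad_\xi b) = 0,\qquad a,b\in\h.$$
This is immediate from the 2-cocycle identity applied to the triple $(\xi,a,b)$: the third cyclic term $\om([a,b],\xi)$ vanishes because $\iota_\xi\om=0$, and what remains is exactly the skew-adjointness relation.

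Granted this, the computation collapses in a few lines. The left-hand side becomes
$\om_\h(\ad_\xi(x*y),z) = -\om_\h(x*y,\ad_\xi z) = \om_\h(y,[x,\ad_\xi z])$
after one application of skew-adjointness followed by (\ref{4}). On the right, (\ref{4}) gives $\om_\h(\ad_\xi x*y,z) = -\om_\h(y,[\ad_\xi x,z])$ directly, while skew-adjointness turns $\om_\h(x*\ad_\xi y,z) = -\om_\h(\ad_\xi y,[x,z])$ into $\om_\h(y,\ad_\xi[x,z])$. Expanding $\ad_\xi[x,z] = [\ad_\xi x,z] + [x,\ad_\xi z]$ by the Jacobi/derivation property of $\ad_\xi$ on the Lie bracket of $\G$ cancels the $[\ad_\xi x,z]$ term against the contribution from $\ad_\xi x * y$, leaving precisely $\om_\h(y,[x,\ad_\xi z])$, which matches the left-hand side. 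I do not expect any real obstacle: once the skew-adjointness of $\ad_\xi$ is isolated, everything is a short automatic cancellation driven by the Jacobi identity.
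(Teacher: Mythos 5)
Your proof is correct and follows essentially the same route as the paper's: both reduce the identity to a scalar equation against an arbitrary $z\in\h$ via non-degeneracy of $\om_\h$, and then combine the defining relation of $*$, the $2$-cocycle condition together with $\iota_\xi\om=0$, and the Jacobi identity. Isolating the skew-adjointness of $\ad_\xi$ with respect to $\om_\h$ as an explicit intermediate step is just a cleaner packaging of what the paper's chain of equalities does inline.
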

\begin{proof} 	
	First, we note that $\ad_\xi x\in\h$ for all $\in\h$. Now for all $x$, $y$,$z\in\h$ we have	
	(by using relations $(\ref{3})$, $(\ref{4})$ and Jacobi identity):
	\begin{align*}
	\om_{\h}(\ad_{\xi}(x*y)-\ad_{\xi}x*y,z)
	&=\om([\xi,x*y],z)+\om(y,[[\xi,x],z])\\
	&=-\om([z,\xi],x*y)+\om(y,[[\xi,x],z])\\
	&=\om([x,[z,\xi]],y)-\om([[\xi,x],z],y)\\
	&=\om([[x,z],\xi],y)\\
	&=-\om([\xi,y],[x,z])\\
	&=\om_{\h}(x*\ad_{\xi}y,z).
	\end{align*}
\end{proof}
The following theorem shows in particular, that a cosymplectic Lie algebra supports a left-symmetric product structure. 
\begin{theo} 
	The product defined by 
	\begin{equation} 
	\varPhi(x.y)(z)=- \varPhi(y)([x,z])\qquad x,y,z\in\G,
	\end{equation}
	is a left-symmetric product in $\G$.
\end{theo}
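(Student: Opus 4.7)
The plan is to prove left-symmetry by a two-step argument: first establish that the commutator of the product $.$ coincides with the Lie bracket of $\G$, and then deduce left-symmetry by a direct manipulation of the defining identity $(\ref{6})$ combined with the Jacobi identity. The slick feature is that once the commutator property is in hand, left-symmetry drops out essentially for free by applying $\varPhi$ to both sides and using that $\varPhi$ is an isomorphism.

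For the first step, I would verify $x.y - y.x = [x,y]$ for all $x, y \in \G$. Splitting $\G = \R\xi \oplus \h$ and invoking Proposition $\ref{pr2.1}$, the cases involving $\xi$ are immediate, e.g.\ $\xi.y - y.\xi = \ad_\xi y = [\xi, y]$ and $x.\xi - \xi.x = -\ad_\xi x = [x,\xi]$. For $x, y \in \h$, Proposition $\ref{pr2.1}$ gives
\[x.y - y.x = (x*y - y*x) + \bigl[\om(x, \ad_\xi y) - \om(y, \ad_\xi x)\bigr]\xi.\]
The $\h$-component equals $[x,y]$ since the left-symmetric product $*$ on the symplectic Lie algebra $(\h,\om_\h)$ has commutator equal to the Lie bracket of $\h$, which is the restriction of that of $\G$. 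The $\xi$-coefficient vanishes by evaluating the $2$-cocycle condition $\oint\om([\cdot,\cdot],\cdot)=0$ on the triple $(x, y, \xi)$ together with $\iota_\xi\om = 0$.

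For the second step, apply the defining relation $(\ref{6})$ twice: for any $z, w\in\G$,
\[\varPhi((x.y).z)(w) = -\varPhi(z)([x.y, w]), \qquad \varPhi(x.(y.z))(w) = \varPhi(z)([y, [x, w]]).\]
Subtracting the analogous identities obtained by interchanging $x$ and $y$, and rewriting $[y, [x, w]] - [x, [y, w]] = -[[x,y], w]$ via Jacobi, one obtains
\[\varPhi\bigl(\,ass(x,y,z) - ass(y,x,z)\,\bigr)(w) = -\varPhi(z)\bigl([\,x.y - y.x - [x,y],\, w\,]\bigr).\]
By Step 1 the inner argument vanishes, so the right-hand side is zero for every $z, w \in \G$. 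Since $\varPhi$ is an isomorphism, $ass(x,y,z) = ass(y,x,z)$, which is exactly the left-symmetry condition.

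The main obstacle is Step 1, and within it the verification that the $\xi$-coefficient of $x.y - y.x$ vanishes on $\h\times\h$. That is precisely where the Reeb axiom $\iota_\xi\om = 0$ and the closedness $\md\om = 0$ must cooperate, and it is what distinguishes the cosymplectic setting from a mere product-type extension of a symplectic Lie algebra. Step 2, by contrast, is a purely formal consequence of the definition $(\ref{6})$ and Jacobi.
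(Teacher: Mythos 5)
Your proof is correct, but it takes a genuinely different route from the paper's. The paper proves left-symmetry by brute force in the splitting $\G=\h\oplus\langle\xi\rangle$: it uses the explicit formulas of Proposition \ref{pr2.1} ($x.y=x*y+\om(x,\ad_\xi y)\xi$, $\xi.x=\ad_\xi x$, $x.\xi=0$) to expand $ass(x,y,z)-ass(y,x,z)$ separately for $x,y,z\in\h$ and for the cases involving $\xi$, where the key input is Lemma \ref{l2} (that $\ad_\xi$ is a derivation of $*$). You instead run the classical Chu/Medina--Revoy argument: establish the commutator identity $x.y-y.x=[x,y]$, then observe that the defining relation $(\ref{6})$ applied twice, together with Jacobi, gives $\varPhi\bigl(ass(x,y,z)-ass(y,x,z)\bigr)(w)=-\varPhi(z)\bigl([x.y-y.x-[x,y],w]\bigr)$, and conclude by injectivity of $\varPhi$. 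All your intermediate identities check out (in particular $\om(x,[\xi,y])=\om(y,[\xi,x])$ from the cocycle condition on $(x,y,\xi)$ and $\iota_\xi\om=0$, an identity the paper also uses). Your approach is shorter and more conceptual, and makes clear that left-symmetry is a formal consequence of the commutator identity plus non-degeneracy; one could streamline it further, since Step 1 does not actually need Proposition \ref{pr2.1}: from $\varPhi(x.y-y.x)(z)=-\om(y,[x,z])+\om(x,[y,z])=\om([x,y],z)=\varPhi([x,y])(z)$ (using $\al([\cdot,\cdot])=0$ and the $2$-cocycle condition directly) the commutator identity follows for all $x,y\in\G$ at once. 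What the paper's computational route buys in exchange is the explicit case-by-case description of the product and of $\ad_\xi$ as a derivation, which it then reuses immediately in Corollary \ref{co 2.1} to characterize bi-invariance.
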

\begin{proof}
	On the one hand, for all $x$, $y$ and $z\in\h$, we have
	\begin{align*}	
	ass(x,y,z)&=(x.y).z-x.(y.z)\\
	&=(x*y+\om(x,[\xi,y])\xi).z-x.(y*z+\om(y,[\xi,z])\xi)\\
	&=(x*y).z+\om(x,[\xi,y])\xi.z-x.(y*z)\\
	&=(x*y)*z+\om(x*y,[\xi,z])\xi+\om(x,[\xi,y])\xi.z-x*(y*z)-\om(x,[\xi,y*z])\xi\\
	&=(x*y)*z-x*(y*z)+\om(x,[\xi,y])\xi.z+A(x,y)\xi\\
	&=ass^{*}(x,y,z)+\om(x,[\xi,y])\xi.z+A(x,y)\xi.
	\end{align*}
	With $A(x,y)=\om(x*y,[\xi,z])-\om(x,[\xi,y*z])$. It is clear that $ass^{*}(x,y,z)=ass^{*}(y,x,z)$,  using $(\ref{3})$ and the fact that $\om(\xi,.)=0$, we get
	\[\om(x,[\xi,y])=\om(y,[\xi,x]).\]
	
	We also have
	\begin{align*}	
	A(x,y)-A(y,x)&=\om([x,y],[\xi,z])-\om(x,[\xi,y*z])+\om(y,[\xi,x*z])\\
	&=-\om(z,[[x,y],\xi])+\om(y*z,[x,\xi])-\om(x*z,[y,\xi])\\
	&=\om(z,[\xi,[x,y]])+\om(z,[y,[\xi,x]])+\om(z,[x,[y,\xi]])\\
	&=0.
	\end{align*}
	It follows that, $ass(x,y,z)=ass(y,x,z)$ for all $x$,$y$,$z\in\h$.
	
	On the other hand,  for all $x$ and $y\in\h$, we have
	\begin{align*}	
	ass(\xi,x,y)-ass(x,\xi,y)&=(\xi.x).y-\xi.(x.y)-(x.\xi).y+x.(\xi.y)\\
	&=(\ad_{\xi}x)*y+\om([\xi,x],[\xi,y])\xi-\ad_{\xi}(x.y)+x*\ad_{\xi}y+\om(x,[\xi,[\xi,y]])\xi.
	\end{align*}
	Then	$ass(\xi,x,y)-ass(x,\xi,y)=0$ is equivalent to
	
	\[
	\left\{
	\begin{array}{l}
	\ad_{\xi}(x*y)=\ad_{\xi}x*y+x*\ad_{\xi}y\\
	\om([\xi,x],[\xi,y])=-\om(x,[\xi,[\xi,y]]).
	\end{array}
	\right.
	\]
	The first equation is ensured by Lemma $\ref{l2}$  while the second follows from the closure of $\om$  and from the fact that $\om(\xi,.)=0$. 
\end{proof}
It is known that (see, for example, \cite{M} Proposition 1-1) in a Lie group an affine connection is bi-invariant if and only if its associated left symmetric product is associative. We get the following result.
\begin{co}\label{co 2.1}
	Let $(G,\al^+,\om^+)$ be a cosymplectic  Lie group with the Reeb vector $\xi^+$, the affine	structure associated  is bi-invariant if and only if the following conditions are satisfied 
	\begin{enumerate}
		\item $ass^*(x,y,z)=\om(\ad_\xi y,x)\,\ad_\xi z$.
		\item $x*\ad_\xi y=0$.
		\item $(\ad_\xi x)*y=(\ad_\xi y)*x.$
		\item $\ad^2_\xi x=0$.
	\end{enumerate}	
	for all $x,y,z\in\h$. 
\end{co}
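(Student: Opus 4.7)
The cited reference \cite{M} reduces bi-invariance of the affine connection to associativity of the associated product $\cdot$, so the plan is to show that $ass(u,v,w)=0$ for all $u,v,w\in\G$ if and only if conditions 1--4 hold. I would carry out the analysis case-by-case on the decomposition $\G=\langle\xi\rangle\oplus\h$. Since Proposition \ref{pr2.1} gives $x\cdot\xi=0$, every triple containing $\xi$ in the third slot vanishes automatically, and the same holds for $ass(\xi,y,\xi)$, $ass(y,\xi,\xi)$ and $ass(\xi,\xi,\xi)$. The genuine cases are (a) $u,v,w\in\h$, (b) $u=\xi$ with $v,w\in\h$ (the case $v=\xi$ reduces to (b) via the symmetry $ass(x,\xi,y)=ass(\xi,x,y)$ already established in the proof of the theorem), and (c) $u=v=\xi$, $w\in\h$, for which $ass(\xi,\xi,w)=-\ad_\xi^2 w$, yielding condition 4 immediately.

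In case (b), the manipulations already used in the proof of the theorem give
\[
ass(\xi,x,y)=-x*\ad_\xi y+\om(\ad_\xi x,\ad_\xi y)\,\xi,
\]
whose $\h$-part vanishes precisely when condition 2 holds; the scalar part I would handle by invoking $\mathcal{L}_\xi\om=d\,\iota_\xi\om+\iota_\xi\,d\om=0$ (Cartan's formula together with $d\om=0$ and $\iota_\xi\om=0$), which yields $\om(\ad_\xi u,v)+\om(u,\ad_\xi v)=0$ and hence $\om(\ad_\xi x,\ad_\xi y)=-\om(x,\ad_\xi^2 y)$; by nondegeneracy of $\om_\h$ this vanishes for all $x\in\h$ iff condition 4 holds. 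For case (a), the computation written out in the proof of the theorem records
\[
ass(x,y,z)=ass^*(x,y,z)+\om(x,\ad_\xi y)\,\ad_\xi z+A(x,y,z)\,\xi,
\]
with $A(x,y,z):=\om(x*y,\ad_\xi z)-\om(x,\ad_\xi(y*z))$; matching $\h$- and $\xi$-components separately shows that associativity on $\h\times\h\times\h$ is equivalent to $ass^*(x,y,z)=\om(\ad_\xi y,x)\,\ad_\xi z$ (condition 1) together with the scalar equation $A(x,y,z)=0$.

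The heart of the proof — and the step I expect to require the most care — is to show that, under conditions 2 and 4, $A=0$ is equivalent to condition 3. The plan is: use condition 2 together with Lemma \ref{l2} to rewrite $\ad_\xi(y*z)$ as $\ad_\xi y*z$; then unfold both summands of $A$ via (\ref{4}), and bring in $\mathcal{L}_\xi\om=0$, closure of $\om$, and the Jacobi identity, until $A$ collapses to the compact form $A(x,y,z)=-\om(x,\ad_\xi[y,z])$. By nondegeneracy of $\om_\h$, this vanishes for all $x,y,z\in\h$ iff $\ad_\xi[\h,\h]=0$. On the other hand, condition 2 combined with the LSA identity $[a,b]=a*b-b*a$ rewrites condition 3 as $[\ad_\xi x,y]=[\ad_\xi y,x]$, which via the derivation identity $\ad_\xi[x,y]=[\ad_\xi x,y]+[x,\ad_\xi y]$ is in turn equivalent to $\ad_\xi[\h,\h]=0$. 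Hence $A=0$ is equivalent to condition 3, completing the equivalence.
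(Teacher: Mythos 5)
The paper states this corollary without proof: it is left as the direct unwinding of ``bi-invariant $\Leftrightarrow$ associative'' (via \cite{M}) using Proposition \ref{pr2.1}, Lemma \ref{l2} and the computations already displayed in the proof of the theorem, and that is exactly what you do. Your case analysis is exhaustive, and the key formulas $ass(\xi,\xi,w)=-\ad_\xi^2w$, $ass(\xi,x,y)=-x*\ad_\xi y+\om(\ad_\xi x,\ad_\xi y)\xi$ and $ass(x,y,z)=ass^*(x,y,z)+\om(x,\ad_\xi y)\ad_\xi z+A(x,y,z)\xi$ all check out; the invariance identity $\om(\ad_\xi u,v)=-\om(u,\ad_\xi v)$ (closure of $\om$ plus $\iota_\xi\om=0$) does convert the scalar part of your case (b) into condition 4 as you claim. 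So the proposal is correct.

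Two remarks on the step you call the heart of the proof. First, the collapse $A(x,y,z)=-\om(x,\ad_\xi[y,z])$ does not follow from the tools you list: after replacing $\ad_\xi(y*z)$ by $(\ad_\xi y)*z$ and unfolding via $(\ref{4})$, the cocycle identity, invariance and Jacobi give
\[
A(x,y,z)=\om(\ad_\xi[y,z],x)-\om(\ad_\xi[x,y],z)+\om(\ad_\xi[x,z],y),
\]
and your one-term form only appears after already imposing $\ad_\xi[\h,\h]=0$, at which point both sides vanish. The equivalence you want survives (antisymmetrizing the three-term expression in $x,y$ yields $-2\,\om(\ad_\xi[x,y],z)$, so $A\equiv0$ iff $\ad_\xi[\h,\h]=0$ by nondegeneracy), but it should be extracted from the full expression, not the collapsed one. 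Second, and more to the point, this entire step is vacuous: condition 2 is \emph{itself} equivalent to $\ad_\xi[\h,\h]=0$, since $x*\ad_\xi y=0$ for all $x\in\h$ means $\om(\ad_\xi y,[x,z])=0$ for all $x,z\in\h$, which by invariance and nondegeneracy of $\om_\h$ is $\ad_\xi[\h,\h]=0$ --- precisely what you show condition 3 amounts to under condition 2. Hence condition 2 already forces condition 3 and $A\equiv0$, condition 3 is redundant in the statement, and your carefully engineered equivalence is one between two automatically true assertions. The proof remains logically valid, but noticing this would shorten it considerably.
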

\begin{remark}
	Note that if $\ad_\xi x=0 $, then the relation $1.$ becomes $ass^*(x,y,z)=0\quad \forall x,y,z\in\h $, this means that the left symmetric product $*$ associated with the symplectic Lie algebra $ (\h,\om_{\h}) $ is associative. Which equivalent to say that $ (\h,\om_{\h}) $ is a {\it{Novikov Lie algebra}} ( see \cite{A-M}  for more details).
\end{remark}

\section{Cosymplectic double extensions}
We suggest two different constructions of  cosymplectic double extension. In the first construction, we combine two characterizations of cosymplectic Lie algebras. The second way of construction is inspired  from the notion of  symplectic double  extension.

\textbf{Double extensions of Lie algebras}: Let $(\Gb,\overline{[\,,\,]})$ be a Lie algebra and let $\theta\in Z^2(\Gb)$ be a $2$-cocycle. We will denote by $(\Gb_{(\theta,e)},\overline{[\,,\,]}_\theta)$
the central extension of  $\Gb$  by the 2-cocycle $\theta$, i.e.,
\begin{eqnarray*}
	\Gb_{(\theta,e)}=(\Gb\oplus \langle e\rangle, [.,.]_\theta)\qquad with\qquad
	[x,y]_\theta &=& \overline{[x,y]}+\theta(x,y)e\quad x,y\in\Gb. 
\end{eqnarray*}

Let \[\G=\langle d\rangle\oplus\Gb\oplus \langle e\rangle\] 
be a direct sum of $\Gb$ with  one-dimensional vector spaces $\langle e\rangle$ and $\langle d\rangle$ .
Define an alternating bilinear product  $[.,.] : \G\times\G\lr\G$ by
\begin{align*}
[x,y] &= \overline{[x,y]}+\theta(x,y)e,\quad\qquad x, y\in\Gb, \\
{[d,x]} &= \varphi(x)+\lambda(x)e, \qquad\qquad x\in\Gb,\\
{[d,e]} &=v+te,
\end{align*}
with, $D=(\varphi,\lambda,v,t)\in End(\Gb)\times\Gb^*\times\Gb\times\R$.

Note by
\begin{align*}
\partial\varphi(x,y)&=\varphi(\overline{[x,y]})-\overline{[\varphi(x), y]}-\overline{[x,\varphi(y)]}\\ 	\theta_\varphi(x,y)&=\theta(\varphi(x),y)+\theta(x,\varphi(y)).
\end{align*}
We have the following result.
\begin{pr}\label{pr 3.1}
	The alternating product $[\,,\,]$  above, defines a Lie algebra  $(\G,[\,,\,])$ if and only if
	\begin{enumerate}
		\item $\partial\varphi=\theta v.$ 
		\item $t\theta-\theta_\varphi=\md\lambda.$
		\item $	v\in Z(\Gb) \cap\ker(\theta)$,
	\end{enumerate}
	where $\ker(\theta)=\{x\in\Gb\; |\;\theta(x,y) = 0,\quad \forall y \in\Gb\}$.
\end{pr}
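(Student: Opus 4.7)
The plan is to verify the Jacobi identity on triples of basis elements, since the bracket is alternating by construction. I will decompose $\G = \langle d\rangle \oplus \Gb \oplus \langle e\rangle$ and examine the Jacobian
$$J(u_1,u_2,u_3) = [[u_1,u_2],u_3] + [[u_2,u_3],u_1] + [[u_3,u_1],u_2]$$
on triples according to how many components come from each summand. Triples with a repeated index are trivial by antisymmetry, and triples $(e,x,y)$ with $x,y\in\Gb$ are trivial because $e$ is central in $\Gb_{(\theta,e)}$, so only three genuine cases remain.

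First I would handle the case $u_1,u_2,u_3\in\Gb$: expanding $[[x,y],z] = \overline{[\overline{[x,y]},z]} + \theta(\overline{[x,y]},z)\,e$ and summing cyclically, the $\Gb$-part vanishes because $\Gb$ is a Lie algebra and the $e$-part vanishes because $\theta\in Z^2(\Gb)$, so this case is automatic (it merely recovers that the central extension is a Lie algebra). Next I would treat the mixed case $u_1=d,\ u_2=x,\ u_3=y$ with $x,y\in\Gb$. Writing $[d,[x,y]] = \varphi(\overline{[x,y]}) + \theta(x,y)v + (\lambda(\overline{[x,y]}) + t\theta(x,y))e$ and $[[d,x],y]+[x,[d,y]] = \overline{[\varphi(x),y]} + \overline{[x,\varphi(y)]} + \theta_\varphi(x,y)\,e$, the projection on $\Gb$ yields exactly $\partial\varphi(x,y) = \theta(x,y)\,v$ (condition 1), while the projection on $\langle e\rangle$, using the Chevalley--Eilenberg convention $\md\lambda(x,y) = -\lambda(\overline{[x,y]})$, yields $(t\theta - \theta_\varphi)(x,y) = \md\lambda(x,y)$ (condition 2).

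Finally, for the triple $(d,x,e)$ with $x\in\Gb$, all brackets but one collapse since $[e,\Gb]=0$, and one computes $J(d,x,e) = -[v+te,\,x] = -\overline{[v,x]} - \theta(v,x)\,e$. The vanishing of both components for every $x\in\Gb$ is precisely condition 3: $v\in Z(\Gb)\cap\ker\theta$. Putting the three cases together gives the equivalence. The bookkeeping of sign conventions on $\md\lambda$ and $\partial\varphi$ is the only real point of care; no single step is conceptually difficult, and the main obstacle will simply be to keep the $\Gb$-component and $\langle e\rangle$-component of each Jacobian cleanly separated throughout the expansion.
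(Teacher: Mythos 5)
Your proposal is correct and follows essentially the same route as the paper: both reduce the Jacobi identity to the triples $(d,x,y)$ and $(d,x,e)$ with $x,y\in\Gb$, the first yielding conditions 1 and 2 (via the same Chevalley--Eilenberg convention for $\md\lambda$) and the second yielding condition 3, with all remaining triples dismissed as trivial. You even carry the same harmless sign slip as the paper: with the stated definition of $\partial\varphi$, the $\Gb$-component of the $(d,x,y)$ Jacobian actually reads $\partial\varphi(x,y)=-\theta(x,y)v$, but this only affects the normalization of condition 1.
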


\begin{proof}
	On the one hand, for $x$,	$y\in\Gb$ we have
	\begin{align*}
	\oint[[d,x],y] &=[[d,x],y]+[[x,y],d]+[[y,d],x] \\
	&=[\varphi(x)+\la(x)e,y]+[\overline{[x,y]}+\theta(x,y)e,d]-[\varphi(y)+\la(y)e,x] \\
	&=\overline{[\varphi(x),y]}+\theta(\varphi(x),y)e- \varphi(\overline{[x,y]})-\la(\overline{[x,y]})e-\theta(x,y)v\\
	&-t\theta(x,y)e-\overline{[\varphi(y),x]}-\theta(\varphi(y),x)e\\
	&=\big(\partial\varphi(x,y)-\theta(x,y)v\big)+\big(\theta_\varphi(x,y)-t\theta(x,y)-\la(\overline{[x,y]})\big)e,
	\end{align*}
	hence $1.$ and $2.$.
	
	On the other hand, we have for $x\in\Gb$
	\begin{align*}
	\oint[[x,e],d]
	&=[[x,e],d]+[[e,d],x]+[[d,x],e]\\
	&=0+[v+te,x]+0\\
	&=\overline{[v,x]}+\theta(v,x)e, 
	\end{align*}
	hence $v\in Z(\Gb)\cap\ker(\theta)$. The other possible Jacobi identities are immediately checked.
\end{proof}
If the conditions of Proposition $\ref{pr 3.1}$ are hold, then the Lie algebra $\Gb(D,\theta)=(\G,[\,,\,])$ is called a {\it{double extension}}   of $(\Gb,\overline{[\,,\,]})$ by $(D,\theta)$, with $D=(\varphi,\lambda,v,t)\in End(\Gb)\times\Gb^*\times\Gb\times\R$.
\begin{remark}
	\begin{enumerate}
		\item The Lie algebra $\Gb(D,\theta)$ is a semi-direct product $\langle d\rangle\ltimes(\Gb\oplus \langle e\rangle)$ of the abelian Lie algebra $\langle d\rangle$ with central extension $\Gb\oplus \langle e\rangle$ relatively to the derivation $D\in Der( \Gb\oplus \langle e\rangle)$ given by
		\begin{align*}
		D(x) &= \varphi(x)+\lambda(x)e, \qquad x\in\Gb\\
		D(e) &= v+te.
		\end{align*}	
		\item  $\varphi$  is a derivation of the Lie algebra $\Gb$ if and only if $v=0$ or $\theta=0$. In particular, this holds when either $Z(\Gb)=\{0\}$ or $\ker(\theta)=\{0\}$.
	\end{enumerate}
\end{remark}

\subsection{The  first construction}

We start with two characterizations of symplectic Lie algebras. The first was given in \cite{L-S} characterizes  cosymplectic manifolds. We state here its analogue for cosymplectic Lie algebras.
\begin{pr}\cite{L-S}\label{pr 3.2}
	Let $\Gb$ be a Lie algebra and $\alb$, $\omb$ two  forms on $\Gb$ with degrees $1$ and $2$ respectively. Consider on the direct sum $\G=\Gb\oplus\langle e \rangle $ the 2-form $\om=\omb+\alb\we e^*$. Then $(\Gb,\alb,\omb)$  is a cosymplectic Lie algebra if and only if $(\G,\om)$ is a symplectic Lie algebra.
\end{pr}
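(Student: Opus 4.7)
The plan is to split each of the two symplectic axioms on $\G$ into a part lying in $\wedge^{\bullet}\Gb^*$ and a part having $e^*$ as a factor, and to verify them separately. Throughout I read $\G=\Gb\oplus\langle e\rangle$ as a direct sum of Lie algebras, so that $e$ is central in $\G$; this is what makes the Chevalley--Eilenberg differential split cleanly, and it also forces $\md_\G e^*=0$.

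First I would treat the non-degeneracy. Since $\dim\G=2n+2$, the form $\om$ is non-degenerate iff $\om^{n+1}\neq 0$. Setting $\eta=\alb\we e^*$, one has $\eta\we\eta=0$ (because $\alb\we\alb=0$) and $\eta$ commutes with $\omb$ under $\we$ since both have even degree. The binomial expansion then collapses to
\begin{equation*}
\om^{n+1}=\omb^{n+1}+(n+1)\,\omb^n\we\alb\we e^*.
\end{equation*}
The first summand lives in $\wedge^{2n+2}\Gb^*$, which vanishes because $\dim\Gb=2n+1$; the second equals $(n+1)\,\alb\we\omb^n\we e^*$, which is nonzero in $\wedge^{2n+2}\G^*$ iff $\alb\we\omb^n\neq 0$ on $\Gb$. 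This already gives the equivalence of condition 3 for $(\Gb,\alb,\omb)$ with the non-degeneracy of $\om$.

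Next I would verify the cocycle condition. Because $[e,\Gb]=0$, the Chevalley--Eilenberg differential $\md_\G$ acts on forms pulled back from $\Gb^*$ exactly as $\md_{\Gb}$ does. Hence
\begin{equation*}
\md_\G\om=\md_{\Gb}\omb+\md_{\Gb}\alb\we e^*.
\end{equation*}
The two summands lie in complementary subspaces of $\wedge^3\G^*$ (one in $\wedge^3\Gb^*$, the other with $e^*$ as a factor), so their sum vanishes iff each summand does. This gives the equivalence of conditions 1 and 2 for $(\Gb,\alb,\omb)$ with the closure of $\om$.

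Combining the two paragraphs establishes the proposition in both directions, since $\alb$ and $\omb$ are canonically recovered from $\om$ as $\om|_{\Gb\times\Gb}$ and $\om(\cdot,e)|_{\Gb}$. I do not foresee any genuine obstacle; the only subtle point is the vanishing of $\omb^{n+1}$, which crucially uses the parity $\dim\Gb=2n+1$ and would fail without odd-dimensionality.
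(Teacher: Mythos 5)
Your argument is correct and complete. Note that the paper gives no proof of this proposition at all---it is quoted from the reference \cite{L-S}---so there is no in-text argument to compare against; your write-up is a valid self-contained verification. The route you take is the natural one: reading $\G=\Gb\oplus\langle e\rangle$ as a direct sum of Lie algebras (consistent with the paper's remark that this is the central extension by $\theta=0$), you split everything along $\wedge^{k}\G^*=\wedge^{k}\Gb^*\oplus\bigl(\wedge^{k-1}\Gb^*\we e^*\bigr)$. The identity $\om^{n+1}=(n+1)\,\alb\we\omb^{n}\we e^*$, which rests on $(\alb\we e^*)^2=0$ and on $\omb^{n+1}=0$ by odd-dimensionality of $\Gb$, correctly reduces non-degeneracy of $\om$ to $\alb\we\omb^{n}\neq 0$; and since $e$ is central, $\md_\G e^*=0$ and $\md_\G$ commutes with pullback from $\Gb$, so $\md_\G\om=\md_{\Gb}\omb+\md_{\Gb}\alb\we e^*$ vanishes iff both summands do, recovering conditions 1 and 2 of the cosymplectic definition. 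The only point deserving one extra sentence is the dimension bookkeeping in the converse direction: if $(\G,\om)$ is symplectic then $\dim\G$ is even, so $\dim\Gb=2n+1$ is automatic and your vanishing of $\omb^{n+1}$ applies; with that said, nothing is missing.
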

\begin{remark}
	The Lie algebra $\G=\Gb\oplus\langle e \rangle $ is a central extension of $\Gb$ by $\theta=0$. 
\end{remark}
As for the second characterization (see \cite{B-F-M} and \cite{C-P}).

\begin{pr}\label{pr 3.3}
	There exist a one-to-one correspondence between ($2n+1$)-dimensional
	cosymplectic Lie algebras $(\G,\al,\om)$, and ($2n$)-dimensional symplectic Lie algebras $(\h=\ker \al,\om_{\h})$  together with a derivation $D\in Der(\h)$, such that $D$ satisfied
	\begin{equation}\label{ist}
	\om_h(Dx,y)=-\om_{\h}(x,Dy),\qquad x,y\in\h.
	\end{equation}
\end{pr}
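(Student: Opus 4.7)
The plan is to set up an explicit correspondence in both directions and then verify they are mutually inverse. The natural candidate for $D$ in the forward direction is $\ad_\xi$ restricted to $\h$.

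Forward direction. Starting from a cosymplectic Lie algebra $(\G,\al,\om)$, Lemma~1.1 already supplies the symplectic Lie algebra $(\h,\om_\h)$. Since $\h=\ker\al$ is an ideal, $\ad_\xi$ preserves $\h$, so we may set $D:=\ad_\xi|_\h$. That $D$ is a derivation of $\h$ is the Jacobi identity applied to $\xi,x,y$. The skew-adjointness condition $(\ref{ist})$ is exactly what the closure of $\om$ imposes: evaluating $\md\om(\xi,x,y)=0$ for $x,y\in\h$, and using $\iota_\xi\om=0$ to kill the term $\om([x,y],\xi)$, one is left with $\om([\xi,x],y)+\om(x,[\xi,y])=0$, i.e.\ $\om_\h(Dx,y)=-\om_\h(x,Dy)$.

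Reverse direction. Given $(\h,\om_\h)$ symplectic together with $D\in Der(\h)$ satisfying $(\ref{ist})$, put $\G:=\h\oplus\langle\xi\rangle$ with bracket extended from $\h$ by $[\xi,x]:=Dx$ for $x\in\h$, and forms $\al,\om$ defined by $\al(\xi)=1$, $\al|_\h=0$, $\om|_{\h\times\h}=\om_\h$, $\iota_\xi\om=0$. The only non-trivial Jacobi identity, $[\xi,[x,y]]=[[\xi,x],y]+[x,[\xi,y]]$, is precisely the derivation property of $D$. Closure of $\al$ is automatic because $\h$ is an ideal containing all brackets. For closure of $\om$, the cyclic sum $\md\om(x,y,z)$ with all three arguments in $\h$ is the $2$-cocycle condition for $(\h,\om_\h)$, while the cyclic sum $\md\om(\xi,x,y)$ reduces, after using $\iota_\xi\om=0$, to $\om_\h(Dx,y)+\om_\h(x,Dy)$, which vanishes by $(\ref{ist})$. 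The volume form condition follows from $\al\we\om^n(\xi,e_1,\dots,e_{2n})=\om_\h^n(e_1,\dots,e_{2n})\neq 0$ for any basis of $\h$.

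Bijectivity. Finally I would check the two assignments are mutually inverse. Starting with $(\G,\al,\om)$, the constructed cosymplectic algebra is $\h\oplus\langle\xi\rangle$ with bracket determined on $\h$ by the original bracket and on $\xi$ by $D=\ad_\xi|_\h$, which reconstitutes the original bracket on $\G$ by the direct sum decomposition $\G=\h\oplus\langle\xi\rangle$. Conversely, starting with $(\h,\om_\h,D)$ and reading off the triple from the constructed $(\G,\al,\om)$, one gets $\ker\al=\h$, $\om|_{\h\times\h}=\om_\h$, and the Reeb vector is $\xi$ so that $\ad_\xi|_\h=D$.

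The only genuinely delicate point is the translation between closure of $\om$ and skew-adjointness of $D$ with respect to $\om_\h$; once the defining relation $\iota_\xi\om=0$ is correctly used to discard the $\om([x,y],\xi)$ term in $\md\om(\xi,x,y)$, everything else is routine bookkeeping on a semidirect sum.
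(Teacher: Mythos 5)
Your argument is correct and complete: taking $D=\ad_\xi|_{\h}$ in the forward direction, reading the i.s.t.\ condition off the $2$-cocycle identity $\oint\om([\xi,x],y)=0$ after discarding $\om([x,y],\xi)$ via $\iota_\xi\om=0$, and reversing via the semidirect sum $\h\oplus\langle\xi\rangle$ with $[\xi,x]=Dx$ is exactly the standard construction. Note that the paper itself offers no proof of this proposition (it is quoted from the references \cite{B-F-M} and \cite{C-P}), so your write-up supplies the omitted argument rather than deviating from one; it matches the intended correspondence, including the correct identification of the Reeb vector in the reconstructed algebra.
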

An  endomorphism  satisfied $(\ref{ist})$ is called an infinitesimal symplectic transformation (for short, an i.s.t.).

By combining these two characterizations, we propose the following constructions of cosymplectic Lie algebras. 

Let $(\Gb,\alb,\omb)$ be a cosymplectic Lie algebra with the Reeb vector $\xib$, recall that we can always write $ \Gb$ as  $\Gb=\hb\oplus\langle\overline\xi\rangle$ with $\hb=\ker(\alb)$.
Let  $(\Gb\oplus\langle e \rangle, {\om}=\omb+\al\we e^*)$ be a symplectic Lie algebra associated with $(\Gb,\alb,\omb)$  (see Proposition $\ref{pr 3.2}$). A derivation  $D \in Der(\Gb\oplus\langle e  \rangle)$ consists of a $4$-tuple $(\varphi,\lambda,v,t)\in End(\Gb)\times\Gb^*\times\Gb\times\R$ such that
\begin{align*}
D(x) &= \varphi(x)+\lambda(x)e, \qquad x\in\Gb,\\
D(e) &= v+te.
\end{align*}
\begin{Le}\label{L3.1}
	The derivation $D \in Der(\G)$  is an i.s.t if and only if the fillowing four conditions are satisfied
	\begin{enumerate}
		\item[(i)] $\varphi$ is an i.s.t in $\hb$.
		\item[(ii)]  $\la(x)= \omb(x,\varphi(\xib))$,\qquad for all $x\in\hb$.
		\item[(iii)]	$\omb(v,x)=\alb\circ\varphi(x)$,\qquad for all $x\in\hb$.
		\item[(iv)] $t=-\alb\circ\varphi(\xib)$.
	\end{enumerate}
\end{Le}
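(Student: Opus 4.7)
The plan is to write out the defining condition for $D$ to be an i.s.t., namely
\[\om(DX,Y)+\om(X,DY)=0 \qquad \forall X,Y\in\G,\]
and evaluate it on pairs drawn from a basis of $\G$ adapted to the decomposition $\G=\hb\oplus\langle\xib\rangle\oplus\langle e\rangle$. Since the relation is bilinear and $\om$ is skew, checking it on such pairs is equivalent to checking it on all of $\G$, and the diagonal pairs $X=Y$ are automatic. The verification will repeatedly use four structural identities: (a) $\om(x,y)=\omb(x,y)$ for $x,y\in\Gb$; (b) $\om(x,e)=\alb(x)$ for $x\in\Gb$; (c) $\alb(\xib)=1$ and $\alb|_{\hb}=0$; (d) $i_{\xib}\omb=0$. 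After substituting $DX=\varphi(X)+\la(X)e$ and $De=v+te$, each pairing collapses to a single linear equation.

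I would then step through the four non-trivial pair types and read off one of (i)--(iv) from each. For $X,Y\in\hb$, both terms involving $\la$ contain a factor $\alb$ evaluated on $\hb$ and hence vanish, leaving $\omb(\varphi(X),Y)+\omb(X,\varphi(Y))=0$, which is condition (i). For $X\in\hb$, $Y=\xib$, identity (d) kills $\omb(\varphi(X),\xib)$ and (c) kills the $\la(\xib)$-term, reducing the identity to $-\la(X)+\omb(X,\varphi(\xib))=0$, i.e.\ (ii). For $X\in\hb$, $Y=e$, only $\alb(\varphi(X))$ and $\omb(X,v)$ survive, giving (iii). For $X=\xib$, $Y=e$, the surviving contributions are $\alb(\varphi(\xib))$ and $t\,\om(\xib,e)=t$, yielding (iv). The converse direction is immediate: assuming (i)--(iv), each of the basis computations reverses to give $\om(DX,Y)+\om(X,DY)=0$, and bilinearity extends this to all of $\G$.

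There is no serious obstacle, only bookkeeping. The one point that needs a moment's care is that the scalar $\la(\xib)$ is not constrained by any of (i)--(iv): in every pairing it multiplies $\om(e,\cdot)$, which either vanishes on $\hb$ by (c), or vanishes on $e$ by antisymmetry, or appears in the diagonal pairing $\om(D\xib,\xib)+\om(\xib,D\xib)$ that cancels identically. This confirms that the four stated conditions capture exactly the i.s.t. property for $D$ on the extended algebra $\G$.
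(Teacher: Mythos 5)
Your proposal is correct and follows essentially the same route as the paper: both expand $\om(DX,Y)+\om(X,DY)=0$ on pairs adapted to the decomposition $\G=\hb\oplus\langle\xib\rangle\oplus\langle e\rangle$, using $i_{\xib}\omb=0$, $\alb|_{\hb}=0$ and $\om(x,e)=\alb(x)$ to read off (i)--(iv) from the four non-trivial pair types. Your closing remark that $\la(\xib)$ is never constrained is a small but accurate addition not made explicit in the paper.
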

\begin{proof} 
	Indeed. For $x$, $y\in\Gb$,  we have
	\begin{align}\label{8}
	\om(Dx,y)+\om(x,Dy)&=  \omb(\varphi(x),y)+\omb(x,\varphi(y))+\la(x)\om(e,y)+\la(y)\om(x,e).
	\end{align}
	If $x\in\hb$ and $y\in\hb$. Using $(\ref{8})$, $D$ is an i.s.t yeilds to $\omb(\varphi(x),y)=-\omb(x,\varphi(y))$.
	
	If  $x\in\hb$ and $y=\xi$. Once again, by applying $(\ref{8})$, the fact that $D$ is an i.s.t gives $\la(x)= \omb(x,\varphi(\xi))$.
	
	Now for $x\in\hb$ and $y=e$, we have
	\begin{align*}
	\om(Dx,e)+\om(x,De)&= \om(\varphi(x)+\la(x)e,e)+\om(x,v+te)\\
	&= \om(\varphi(x),e)+\omb(x,v),
	\end{align*}
	since $D$ is an i.s.t, it follows that $\omb(x,v)=\alb\circ\varphi(x)$.
	
	Finally, for $x=\xib$ and $y=e$, we have
	\begin{align*}
	\om(D\xib,e)+\om(\xib,De) &=\om(\varphi(\xib)+\la(\xib)e,e)+\om(\xib,v+te)\\ 
	&=\om(\varphi(\xib),e)+t,
	\end{align*}
	so $t=-\alb\circ\varphi(\xib)$. This completes the proof.
\end{proof}
Let $D=(\varphi,\la,v,t)$ satisfying  Lemma \ref{L3.1} conditions.

Denote by $*$ the left-symmetric product associated with the symplectic Lie algebra $(\hb,\omb)$ and let $\overline{R}_x$ be the right multiplication by an element $x$, that is $\overline{R}_xy=y*x$, we obtain the following result.
\begin{theo}
	Let $(\Gb,\alb,\omb)$ be a cosymplectic Lie algebra with the Reeb vector $\xib$. Let  $\G=\langle d \rangle\oplus\Gb\oplus\langle e \rangle$ be a double extension   of $\Gb$, by $(D,\theta=0)$, with $D=(\varphi,\lambda,v,t)$ satisfies the hypotheses of Lemma \ref{L3.1}. Then 
	\[{\om}= \omb+\alb\we e^*,\qquad
	{\al}= d^*,\]
	defines a cosymplectic structure in $\G$ if and only if
	\begin{enumerate}
		\item $\varphi\in Der(\Gb)$,
		\item    $v\in Z(\Gb),$
		\item $\overline{R}_{\varphi(\xib)}=0$ and $[\varphi(\xib),\xib]=0$.
		
	\end{enumerate}
	Also, the Reeb vector is $d$.
\end{theo}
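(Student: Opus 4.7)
The plan is to recognize the statement as a combination of Propositions~\ref{pr 3.1}, \ref{pr 3.2} and~\ref{pr 3.3}: once $\G$ is a Lie algebra, the pair $(\al,\om)$ is cosymplectic with Reeb $d$, because $\ker\al=\Gb\oplus\langle e\rangle$ carries a symplectic form by Proposition~\ref{pr 3.2} and $\ad_d$ restricts to the infinitesimal symplectic transformation $D$ supplied by Lemma~\ref{L3.1}. So the real content of the theorem is that items~1--3 are exactly the Jacobi conditions of Proposition~\ref{pr 3.1} in the case $\theta=0$.

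I would first dispose of the routine items. The form $\al=d^*$ is closed because the bracket formulas give $[\G,\G]\subseteq\Gb\oplus\langle e\rangle$, so $\al([x,y])=0$. The vector $d$ is the Reeb: $\al(d)=1$, and $\iota_d\om=0$ because both $\omb$ and $\alb\we e^*$ vanish on $d$. Non-degeneracy $\al\we\om^{n+1}\neq0$ reduces, by restriction to $\h=\Gb\oplus\langle e\rangle$, to the non-degeneracy of $\omb+\alb\we e^*$ on $\h$, which is Proposition~\ref{pr 3.2}.

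The main step is matching items~1--3 with the Jacobi identity. Items~1 and~2 are literally conditions~1 and~3 of Proposition~\ref{pr 3.1} at $\theta=0$. For the remaining condition $\md\lambda=0$, the inclusion $[\Gb,\Gb]\subseteq\hb$ and Lemma~\ref{L3.1}(ii) give $\lambda([x,y])=\omb([x,y],\varphi(\xib))$ for all $x,y\in\Gb$. When $x,y\in\hb$, the defining formula $\omb(y*x,z)=-\omb(x,[y,z])$ of the left-symmetric product of $(\hb,\omb|_\hb)$ recasts the vanishing as $\omb(y*\varphi(\xib),x)=0$ for all $x\in\hb$, i.e.\ $\overline R_{\varphi(\xib)}=0$. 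When $x\in\hb$ and $y=\xib$, applying the $2$-cocycle identity for $\omb$ on $\Gb$ to the triple $(x,\xib,\varphi(\xib))$ and dropping the cyclic term involving $\omb(\cdot,\xib)$ via $\iota_\xib\omb=0$ rewrites the equation as $\omb(x,[\varphi(\xib),\xib])=0$. Since $[\varphi(\xib),\xib]\in\hb$ and $\omb|_\hb$ is non-degenerate, this is exactly $[\varphi(\xib),\xib]=0$. Hence item~3 is precisely $\md\lambda=0$, and items~1--3 together are equivalent to $\G$ being a Lie algebra; under this, Proposition~\ref{pr 3.3} furnishes the cosymplectic structure.

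The main obstacle is the second half of the matching above: turning $\omb([x,\xib],\varphi(\xib))=0$ into $[\varphi(\xib),\xib]=0$ takes the cyclic $2$-cocycle identity to move the bracket out of the first slot, $\iota_\xib\omb=0$ to discard the surviving $\xib$-term, and non-degeneracy of $\omb$ on $\hb$ to conclude. The rest is bookkeeping or a direct appeal to Propositions~\ref{pr 3.2}--\ref{pr 3.3}; a reader who prefers a hands-on verification of $\md\om=0$ can instead run a four-case split on triples $(x,y,z)$ according to their membership in $\Gb$, $\langle e\rangle$, $\langle d\rangle$, each case collapsing to $\md\omb=0$, $\md\alb=0$, or one of the items of Lemma~\ref{L3.1}.
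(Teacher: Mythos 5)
Your proposal is correct and follows essentially the same route as the paper: both reduce the statement to the Jacobi conditions of Proposition~\ref{pr 3.1} with $\theta=0$, identify items 1 and 2 directly, and unpack $\md\lambda=0$ via Lemma~\ref{L3.1}(ii) into the two sub-conditions on $[\hb,\hb]$ and $[\xib,\hb]$, handled respectively by the left-symmetric product (giving $\overline{R}_{\varphi(\xib)}=0$) and by the $2$-cocycle identity plus non-degeneracy of $\omb|_{\hb}$ (giving $[\varphi(\xib),\xib]=0$). The only cosmetic difference is that you delegate the non-degeneracy of $\al\we\om^{n+1}$ to Propositions~\ref{pr 3.2} and~\ref{pr 3.3}, whereas the paper verifies directly that $\varPhi$ is an isomorphism.
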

\begin{proof} With $\theta=0$,  Proposition \ref{pr 3.1} gives $\varphi\in Der(\Gb)$, $v\in Z(\Gb)$ and $\md\la=0$.
	\begin{eqnarray*}	
		\md\la=0	&\Leftrightarrow& \left \{
		\begin{array}{rll}
			\omb([x,y],\varphi(\xib))&=0,&\forall x,y\in\hb. \qquad(^*)\\
			\omb([\xib,x],\varphi(\xib))&=0,&\forall x\in\hb.\qquad(^{**})
		\end{array}
		\right.
	\end{eqnarray*}
	The $(^*)$ condition is equivalent to $\omb(x,y*\varphi(\xib))=0$ for all $x,y\in\hb$ then $\overline{R}_{\varphi(\xib)}=0$.	
	
	Since $\omb$ is 2-cocycle $(^{**})$ becomes
	\[	\omb([\varphi(\xib),\xib],x)=0,\qquad\forall x\in\hb,\]
	as $\omb$ is non-degenerate 2-form on $\hb$, that holds $[\varphi(\xib),\xib]=0$. To complete the proof, it remains to verify that $\al\we\om^n \neq0$, which equivalent to prove that $\varPhi : \G\lr\G^*$ is an isomorphism. Indeed for $x\in\langle d \rangle\oplus\Gb\oplus\langle e \rangle$, we can write  $x=x_1d+\overline{x}+x_2\xib+x_3e$ with $\overline{x}\in\hb$ and $x_1,\,x_2,x_3\in\R$. A direct calculation gives us.
	\[\left\{
	\begin{array}{ll}
	\varPhi(x,\overline{y})&=\omb_{\hb}(\overline{x},\overline{y}),\qquad \forall \overline{y}\in\hb,\\
	\varPhi(x,\xib)&=-x_3,\\
	\varPhi(x,e)&=x_2,\\
	\varPhi(x,d)&=x_1.
	\end{array}
	\right.\] Which means that if $\varPhi(x,.)=0$,
	then $x=0$.
	
\end{proof}

\subsection{The  second construction}
Now let $(\Gb,\alb,\omb)$ be a cosymplectic Lie algebra and $\overline{[.,.]}$ its Lie bracket and let
\[\G=\langle d\rangle\oplus\Gb\oplus \langle e\rangle,\]
be a double extension  of $(\Gb,\overline{[\,,\,]})$ by $(D,\theta)$. We define a  2-form $\omega$ and  a 1-form $\alpha$ on the vector space $\G$ by requiring that
\[\omega=\overline{\omega}+d^*\we e^*,\]
and 
\[\alpha(x)=\overline{\alpha}(x),\;\;\forall x\in\Gb,\quad 
\alpha(d)\in \G,\quad\alpha(e)\in \G.\]
On the one hand , if we assume that $\alpha$ is a 1-cocycle, we get
\[
\left\{
\begin{array}{l}
\al([x,y])=\theta(x,y)\al(e)=0,\\
\al([d,x])=\alb(\varphi(x))+\lambda(x)\al(e)=0,\\
\al([d,e])=\al(v)+t\al(e)=0.
\end{array}
\right.\qquad (\dagger)
\]

On the other hand, while $\om$ is a 2-cocycle, consequently for all $x$, $y\in\Gb$
\[
\left\{
\begin{array}{ll}
\oint\om([x,y],d)&=\om(\overline{[x,y]},d)+\theta(x,y)\om(e,d)+\omb(\varphi(x),y)+\lambda(x)\om(e,y)-\omb(\varphi(y),x)-\lambda(y)\om(d,y)\\
&=-\theta(x,y)+\omb(\varphi(x),y)+\omb(x,\varphi(y))=0,\\
\oint\om([x,e],d)&=\omb(v,x)=0.
\end{array}
\right.\quad (\ddagger)
\]

We distinguish two cases.

\textbf{The first case}: $\al(e)=0$. We use relations $(\dagger)$ and $(\ddagger)$ to obtain
\begin{equation*}
\alb\circ\varphi=0\esp\al(v)=0,
\end{equation*}
\begin{equation*}
\theta=\omb_\varphi\esp v\in\ker(\omb).
\end{equation*}
Note that $\al(v)=0$ and $v\in\ker(\omb)$ trains that  $v=0$.

Note by $\om_{\varphi,\varphi}=\theta_\varphi$. By combining the conditions described above with Proposition $\ref{pr 3.1}$ we obtain the following result.
\begin{theo}
	Let $(\Gb,\alb,\omb)$ be a  cosymplectic Lie algebra with the Reeb vector $\xib$. Let $\G=\langle d\rangle\oplus\Gb\oplus \langle e\rangle$ be a  double extension   of $\Gb$ by  $(D,\omb_\varphi)$ with $D=(\varphi,\lambda,0,t)\in Der(\Gb)\times\Gb^*\times\Gb\times\R$. Then 
	\[{\om}= \omb+d^*\we e^*,\]
	\[\alpha(x)=\overline{\alpha}(x),\;\;\forall x\in\Gb,\quad 
	\alpha(d)\in \R,\]
	defines a cosymplectic structure on $\G$ if and only if 
	\begin{enumerate}
		\item  $\al(e)=0$,   \item $\alb\circ\varphi=0$,
		\item $t\om_\varphi-\om_{\varphi,\varphi}=\md\lambda$.
	\end{enumerate}
	Also, the Reeb vector is $\xib$.
\end{theo}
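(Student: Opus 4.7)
My strategy is to verify the three defining axioms of a cosymplectic Lie algebra on the $(2n+3)$-dimensional double extension $\G$: that $\al$ is a 1-cocycle, $\om$ is a 2-cocycle, and $\al\wedge\om^{n+1}$ is a volume form. I would first invoke Proposition \ref{pr 3.1} to pin down the Lie algebra structure. With $v=0$ and $\theta=\omb_\varphi$, its conditions 1 and 3 hold automatically (the first because $\varphi\in Der(\Gb)$, the third because $v=0$), while its condition 2 reads exactly $t\omb_\varphi-\omb_{\varphi,\varphi}=\md\lambda$, which is item 3 of the theorem. So $\G$ carries a Lie algebra structure precisely when item 3 holds.

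To test $\md\al=0$, I would evaluate $\al$ on the three types of generating brackets of the double extension. Using $\al|_{\Gb}=\alb$ with $\alb$ closed on $\Gb$, together with $v=0$ and $\theta=\omb_\varphi$, one obtains
\[
\al([x,y])=\omb_\varphi(x,y)\,\al(e),\qquad \al([d,x])=\alb(\varphi(x))+\lambda(x)\al(e),\qquad \al([d,e])=t\,\al(e),
\]
for $x,y\in\Gb$. Simultaneous vanishing of all three is equivalent to items 1 and 2: $\al(e)=0$ kills the first and third expressions and reduces the middle equation to $\alb\circ\varphi=0$.

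The 2-cocycle condition for $\om$ is then automatic. I would split the cyclic sum $\oint\om([x,y],z)=0$ by how many of $x,y,z$ lie in $\Gb$ rather than in $\langle d\rangle\oplus\langle e\rangle$. With all three in $\Gb$, the sum reduces to $\oint\omb([x,y],z)=0$, the 2-cocycle identity for $\omb$. With two in $\Gb$ and the third equal to $d$, a direct computation gives $-\theta(x,y)+\omb_\varphi(x,y)=0$, which holds by the choice $\theta=\omb_\varphi$. When the third is $e$, or when the triple is $(x,d,e)$ with $x\in\Gb$, every term vanishes because $e$ is central in $\Gb\oplus\langle e\rangle$, $v=0$, and $d^*\wedge e^*$ pairs trivially with $\Gb$-valued arguments.

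For the volume form, expanding $(\omb+d^*\wedge e^*)^{n+1}$ and using $(d^*\wedge e^*)^2=0$ together with $\omb^{n+1}=0$ (since $\ker\omb=\langle\xib\rangle$ on $\Gb$) isolates the cross term $\om^{n+1}=(n+1)\,\omb^n\wedge d^*\wedge e^*$; hence $\al\wedge\om^{n+1}=(n+1)\,\alb\wedge\omb^n\wedge d^*\wedge e^*$ is nonzero because $\alb\wedge\omb^n$ is a volume form on $\Gb$, regardless of $\al(d)$. The Reeb vector is $\xib$: $\al(\xib)=\alb(\xib)=1$ and $\iota_{\xib}\om=\iota_{\xib}\omb+\iota_{\xib}(d^*\wedge e^*)=0$, since $\xib$ is Reeb for $(\Gb,\alb,\omb)$ and $d^*(\xib)=e^*(\xib)=0$. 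The main obstacle is keeping the case analysis for $\om$-closedness organized; the remaining verifications are direct bookkeeping with the explicit brackets of the double extension.
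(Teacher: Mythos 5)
Your proof is correct and follows essentially the same route as the paper: the Lie-algebra and cocycle analysis reproduces the paper's combination of Proposition~\ref{pr 3.1} with the computations $(\dagger)$ and $(\ddagger)$ that precede the theorem. The only minor divergence is in checking nondegeneracy, where you compute $\om^{n+1}=(n+1)\,\omb^{n}\we d^{*}\we e^{*}$ directly instead of writing out the coordinate expression of the isomorphism $\G\lr\G^{*}$ as the paper does; you also verify the Reeb vector claim explicitly, which the paper leaves to the reader.
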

\begin{proof}
	It remains to verify that $\varPhi : \G\lr\G^*$ is an isomorphism. Let $x\in\langle d \rangle\oplus\Gb\oplus\langle e \rangle$ we can write  $x=x_1d+\overline{x}+x_2\xib+x_3e$, with $\overline{x}\in\hb$ and $x_1,\,x_2,\,x_3\in\R$. A direct calculation gives us 
	\[\left\{
	\begin{array}{ll}
	\varPhi(x,\overline{y})&=\omb_{\hb}(\overline{x},\overline{y}),\qquad \forall \overline{y}\in\hb,\\
	\varPhi(x,e)&=x_1,\\
	\varPhi(x,\xib)&=x_1\al(d)+x_2,\\
	\varPhi(x,d)&=-x_3+x_1\al^2(d)+x_2\al(d).
	\end{array}
	\right.\]
	Which means that if $\varPhi(x,.)=0$, then $x=0$.
\end{proof}
\textbf{The second case}: $\al(e)\not=0$. To simplify, we can always take $\al(e)=-1$.   We use relation $(\dagger)$ and $(\ddagger)$ to obtain
\begin{equation*}
\omb_\varphi=\theta=0\esp \varphi\in Der(\Gb),
\end{equation*}
\begin{equation*}
\lambda=\alb\circ\varphi\esp t=\alb(v),
\end{equation*}
\begin{equation*}
v\in\ker(\omb).
\end{equation*}
By combining the conditions described above  with Proposition $\ref{pr 3.1}$ we obtain the following result.
\begin{theo}
	Let $(\Gb,\alb,\omb)$ be a cosymplectic Lie algebra with the Reeb vector $\xib$. Let $\G=\langle d \rangle\oplus\Gb\oplus\langle e \rangle$ be a double extension   of $\Gb$, by $(D,\theta=0)$, with $D=(\varphi,\alb\circ\varphi,v,\alb(v))\in Der({\Gb})\times{\Gb}^*\times{\Gb}\times\R$. Then 
	\[{\om}= \omb+d^*\we e^*,\]
	\[\alpha(x)=\overline{\alpha}(x),\;\;\forall x\in\Gb,\quad 
	\alpha(d)\in \R,\quad\alpha(e)=-1,\]
	defines a cosymplectic structure in $\G$ if and only if
	\begin{enumerate}
		\item  $\omb_\varphi=0$,
		\item  $\alb\circ\varphi([x,y])=0$,\quad $\forall x,y\in\Gb$,
		\item $ v\in Z(\Gb)\cap \ker(\omb)$.
	\end{enumerate}
	Also, the Reeb vector is $\xib$.	
\end{theo}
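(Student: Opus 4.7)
The plan is to check, one after another, the three requirements of a cosymplectic structure on $\G$ --- namely $\md\al = 0$, $\md\om = 0$ and $\al\we\om^n\neq 0$ --- and then to identify the Reeb vector. Fortunately, most of the bookkeeping has already been done in the discussion preceding the theorem; the task is to organize the conditions $(\dagger)$ and $(\ddagger)$ together with Proposition \ref{pr 3.1} and to append the non-degeneracy check and the Reeb-vector computation.

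First I would observe that the normalization $\al(e) = -1$ substituted into $(\dagger)$ forces exactly the three equalities $\theta = 0$, $\lambda = \alb\circ\varphi$ and $t = \alb(v)$, which are precisely what is built into the data $D = (\varphi,\alb\circ\varphi,v,\alb(v))$ and the standing assumption $\theta = 0$. So the closure of $\al$ is taken care of by the statement's setup. Next, with $\theta = 0$ already enforced, the two cyclic sums computed in $(\ddagger)$ reduce to $\omb_\varphi = 0$ (which is condition 1) and $\omb(v,x) = 0$ for all $x\in\Gb$, i.e.\ $v\in\ker\omb$. The remaining cyclic sums for $\om$, those confined to $\Gb$ or to triples involving only $e$, vanish automatically because $\omb$ is a 2-cocycle on $\Gb$ and $e$ is central in $\Gb\oplus\langle e\rangle$.

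Then I would feed these identifications into Proposition \ref{pr 3.1}. With $\theta = 0$ the condition $\partial\varphi = \theta v$ is automatic (so $\varphi\in\mathrm{Der}(\Gb)$ at no extra cost), the condition $v\in Z(\Gb)\cap\ker\theta$ becomes simply $v\in Z(\Gb)$, which combined with $v\in\ker\omb$ yields condition 3; and the condition $t\theta - \theta_\varphi = \md\lambda$ collapses to $\md\lambda = 0$. Since $\lambda = \alb\circ\varphi$, this last equality is exactly condition 2: $\alb\circ\varphi([x,y]) = 0$ for all $x,y\in\Gb$. Thus conditions 1--3 together are equivalent to having a Lie-algebra structure on $\G$ for which $\al$ and $\om$ are both closed.

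To finish, I would decompose $\G = \langle d\rangle\oplus\hb\oplus\langle\xib\rangle\oplus\langle e\rangle$ and write a generic $x = x_1 d + \bar x + x_2\xib + x_3 e$ with $\bar x\in\hb$. Testing $\varPhi(x,\cdot) = 0$ against $\bar y\in\hb$ forces $\bar x = 0$ by non-degeneracy of $\omb_\hb$ (using $\iota_{\xib}\omb = 0$); testing against $e$ forces $x_1 = 0$; testing against $d$ then forces $x_3 = 0$; and testing against $\xib$ finally forces $x_2 = 0$. Hence $\varPhi$ is injective, hence an isomorphism, hence $\al\we\om^n\neq 0$. For the Reeb vector, writing $\eta = \eta_1 d + \bar\eta + \eta_2\xib + \eta_3 e$, the condition $\iota_\eta\om = 0$ paired against $\hb$, $e$ and $d$ gives $\bar\eta = 0$, $\eta_1 = 0$ and $\eta_3 = 0$, and the normalization $\al(\eta) = 1$ then yields $\eta_2 = 1$, so $\eta = \xib$. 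The one place where care is needed, rather than a genuine obstacle, is the non-degeneracy computation, where the off-diagonal term $d^*\we e^*$ and the free parameter $\al(d)\in\R$ produce mixed coefficients; one must test $\varPhi(x,\cdot)$ against the four basis blocks in the correct order so that each equation isolates one coordinate.
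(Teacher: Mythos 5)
Your proposal is correct and follows essentially the same route as the paper: the cocycle conditions are read off from $(\dagger)$, $(\ddagger)$ and Proposition \ref{pr 3.1} exactly as in the text preceding the theorem, and the printed proof consists only of the non-degeneracy check via the same decomposition $x=x_1d+\overline{x}+x_2\xib+x_3e$ and the same four evaluations of $\varPhi$. One small detail: the evaluation against $e$ gives $x_1-\al(x)$, not $x_1$ alone, so it does not isolate $x_1$ by itself; one should first use the $\xib$-evaluation to get $\al(x)=0$ (or note that the resulting $3\times3$ linear system in $x_1,x_2,x_3$ is invertible), after which the elimination proceeds as you describe.
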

\begin{proof}
	It remains to verify that $\varPhi : \G\lr\G^*$ is an isomorphism. Let $x\in\langle d \rangle\oplus\Gb\oplus\langle e \rangle$ we can write  $x=x_1d+\overline{x}+x_2\xib+x_3e$, with $\overline{x}\in\hb$ and $x_1,\,x_2,\,x_3\in\R$. A direct calculation gives us 
	\[\left\{
	\begin{array}{ll}
	\varPhi(x,\overline{y})&=\omb_{\hb},(\overline{x},\overline{y}),\qquad \forall \overline{y}\in\hb\\
	\varPhi(x,\xib)&=x_1\al(d)+x_2-x_3,\\
	\varPhi(x,e)&=x_1-x_1\al(d)-x_2+x_3,\\
	\varPhi(x,d)&=-x_3+x_1\al^2(d)+x_2\al(d)-x_3\al(d),
	\end{array}
	\right.\]
	which means that if $\varPhi(x,.)=0$, then $x=0$.
\end{proof}

\begin{exems} 
	In order to illustrate and compare these three constructions. We end this chapter with examples of five-dimensional cosymplectic Lie algebras construction from the same three-dimensional cosymplectic Lie algebra. 
	
	We consider the following three-dimensional cosymplectic Lie algebra:
	\[\Gb : [e_1,e_2]=e_1\esp\alb=e^3,\quad\omb=e^{12}.\]
	Let $\G=\langle e_4 \rangle\oplus\Gb\oplus\langle e_5 \rangle$ be a  double extension   of $(\Gb,\overline{[\,,\,]})$ by $(D,\theta)$, with $D=(\varphi,\lambda,v,t)$.
	
	It is straightforward to check that  derivations on the Lie algebra $\Gb$ take the following form
	\[\varphi=\begin{pmatrix}
	a&b&0\\0&0&0\\0&c&f
	\end{pmatrix},\quad a,b,c,f\in\R.\]
	Set $v=ze_3\in Z(\Gb)$ and $\la=\la_1e_1+\la_2e_2+\la_3e_3\in\Gb^*$  ,\quad$z,\la_i\in\R$.
	
	After standard calculations. 
	\begin{enumerate}
		\item[]\textit{The  first construction}, gives us the following cosymplectic Lie algebra
		\[\G_1 : \begin{array}{ll}
		[e_1,e_2]=e_1, &[e_4,e_3]=fe_3+\la_3e_5,\\
		{[e_4,e_2]=be_1}, &[e_4,e_5]=ze_3-fe_5.
		\end{array}\]
		\[\al=e^4,\quad\om=e^{12}+e^{35},\quad\xi=e_4.\]
		\item[]\textit{The  second construction}:
		\[\G_2 : \begin{array}{ll}
		[e_1,e_2]=e_1+ae_5, &[e_4,e_2]=be_1+\la_2e_5,\\
		{[e_4,e_1]=ae_1+(a^2-ta)e_5}, &[e_4,e_3]=\la_3e_5,\\
		{[e_4,e_5]=te_5.}
		\end{array}\]
		\[\al=e^3+xe^4,\quad\om=e^{12}+e^{45},\quad\xi=e_3,\quad x\in\R.\]
		
		\item[]\textit{The  third construction}:
		\[\G_3 : \begin{array}{ll}
		[e_1,e_2]=e_1, &[e_4,e_3]=f(e_3+e_5),\\
		{[e_4,e_2]=be_1+c(e_3+e_5)}, &[e_4,e_5]=t(e_3+e_5).
		\end{array}\]
		\[\al=e^3+xe^4-e^5,\quad\om=e^{12}+e^{45},\quad\xi=e_3,\quad x\in\R.\]
		
	\end{enumerate}
	Let us denote by $D(\G_i)$ the derived Lie algebra of $\G_i $, $i= 1,\ldots,3$. 
	It is clear that for a suitable choice of the constants $(b,f,\la_3,z)$ we have $\dim(D(\G_1))=3$, while $\dim(D(\G_2)) $ and $\dim(D(\G_3))$ are less than or equal to two. Therefore the cosymplectic Lie algebra $\G_1$ never comes from the other two constructions.
\end{exems}

\section{Classification in low dimensional}

\subsection{Three-dimensional cosymplectic Lie algebras}

Let $\G$ be a three-dimensional Lie algebra. Denoting by $\al=a_1e^1+a_2e^2+a_3e^3$ a 1-form and $\om=a_{12}e^{12}+a_{13}e^{13}+a_{23}e^{23}$ a  2-form on $\G$. On each three-dimensional Lie algebra we compute the 1 and 2-cocycle conditions for $\al$ and $\om$ respectively. Next, we calculate the rank of $\varPhi$, if $\varPhi$ has a maximum rank, then $\G$ supports a cosymplectic structure.
\begin{pr}
	Let $(\G,\al,\om)$ be a three-dimensional   cosymplectic real Lie algebra. Then
	$\G$ is isomorphic to one of the following Lie algebras equipped with a cosymplectic structure:
	\begin{enumerate}	
		\item[$\G_{2.1}\oplus \G_1:$] $[e_1,e_2]=e_1$  (decomposable solvable, Bianchi III)
		\begin{align*}
		\al&=a_2e^2+a_3e^3\\
		\om&=a_{12}e^{12}+a_{23}e^{23},	\qquad a_3a_{12}\not=0.	
		\end{align*}	
		\item[$\G_{3.1}:$] $[e_2,e_3]=e_1$  (Weyl algebra, nilpotent, Bianchi II)
		\begin{align*}
		\al&=a_2e^2+a_3e^3\\
		\om&=a_{12}e^{12}+a_{13}e^{13}+a_{23}e^{23},	\qquad a_3a_{12}-a_2a_{13}\not=0.	
		\end{align*}
		\item[$\G_{3.4}^{-1}:$]  $[e_1,e_3]=e_1$ and $[e_2,e_3]=-e_2$   (solvable, Bianchi VI, $a=-1$)
		\begin{align*}
		\al&=a_3e^3\\
		\om&=a_{12}e^{12}+a_{13}e^{13}+a_{23}e^{23},	\qquad a_3a_{12}\not=0.	
		\end{align*}
		\item[$\G_{3.5}^0:$]  $[e_1,e_3]=-e_2$ and $[e_2,e_3]=e_1$   (solvable, Bianchi VII, $\be=0$)
		\begin{align*}
		\al&=a_3e^3\\
		\om&=a_{12}e^{12}+a_{13}e^{13}+a_{23}e^{23},	\qquad a_3a_{12}\not=0.	
		\end{align*}
	\end{enumerate}
\end{pr}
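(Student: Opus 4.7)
The proof is a case-by-case verification based on the Bianchi classification of real $3$-dimensional Lie algebras. On each such algebra the plan is to solve the three conditions defining a cosymplectic structure by direct computation in a standard basis.

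First, I would eliminate the two semisimple algebras $\Sl_2(\R)$ and $\mathfrak{so}(3)$ (Bianchi VIII and IX) using the remark already made after condition 1: $[\G,\G]=\G$ forces $\al=0$, which is incompatible with $\al\we\om\neq 0$. This leaves the solvable types to inspect, namely Bianchi II (Heisenberg $\G_{3.1}$), Bianchi III ($\G_{2.1}\oplus\langle e\rangle$), Bianchi IV, V, and the one-parameter families VI$_a$ and VII$_\be$.

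Next, in a basis $\{e_1,e_2,e_3\}$ realizing the standard Bianchi brackets I would set $\al=a_1e^1+a_2e^2+a_3e^3$ and $\om=a_{12}e^{12}+a_{13}e^{13}+a_{23}e^{23}$, and translate the three conditions into algebraic constraints. Condition 1 is the requirement that $\al$ vanish on the derived ideal $[\G,\G]$; since $\we^3\G^*$ is one-dimensional, condition 2 reduces to the single linear equation $\om([e_1,e_2],e_3)+\om([e_2,e_3],e_1)+\om([e_3,e_1],e_2)=0$; condition 3 is the non-vanishing of the coefficient of $e^{123}$ in $\al\we\om$.

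A direct inspection then shows: for Bianchi IV, V, VI$_a$ with $a\neq -1$, and VII$_\be$ with $\be\neq 0$, the derived ideal is $\langle e_1,e_2\rangle$, so condition 1 gives $\al=a_3e^3$; condition 2 forces $a_{12}=0$, whence $\al\we\om=0$ and these types are excluded. The four surviving types correspond exactly to the algebras listed in the statement: for $\G_{2.1}\oplus\langle e\rangle$ condition 1 gives $a_1=0$ and condition 2 gives $a_{13}=0$, leaving non-degeneracy $a_3a_{12}\neq 0$; for the Heisenberg $\G_{3.1}$ the cyclic identity is automatic (since $e_1$ is central), condition 1 gives $a_1=0$, and non-degeneracy becomes $a_3a_{12}-a_2a_{13}\neq 0$; for $\G_{3.4}^{-1}$ and $\G_{3.5}^{0}$ condition 2 is tautological and condition 1 forces $\al=a_3e^3$, so non-degeneracy reads $a_3a_{12}\neq 0$. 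The main obstacle is the bookkeeping required to rule out the one-parameter families VI$_a$ and VII$_\be$ at every value except the critical one; beyond that, everything is routine linear algebra in a three-dimensional space.
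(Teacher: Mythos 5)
Your proposal is correct and takes essentially the same route as the paper: a case-by-case sweep through the Bianchi list of three-dimensional Lie algebras, imposing the $1$-cocycle condition on $\al$, the single $2$-cocycle equation on $\om$, and the non-degeneracy $\al\we\om\neq 0$ (the paper phrases this last step as maximality of the rank of the map $x\mapsto i_x\om+\al(x)\al$, which is equivalent). The only point to flag is that both you and the paper tacitly skip the abelian algebra $3\G_1$ (Bianchi I), which does admit cosymplectic structures and is presumably being excluded as trivial.
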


Two  cosymplectic Lie algebra  $(\G_1,\al_1, \om_1)$ and $(\G_2,\al_2, \om_2)$ are said to be isomorphic if there exists a Lie algebra isomorphism $L : \G_1\lr\G_2$ such that
\[L^*(\al_2)=\al_1\esp L^*(\om_2)=\om_1.\]
The following proposition gives a complete classification  up to an automorphism to  cosymplectic structures on  three-dimensional Lie algebras.
\begin{pr}\label{pr 4.2}
	Let $(\G,\al,\om)$ be a three-dimensional   cosymplectic real Lie algebra. Then
	$(\G,\al,\om)$ is isomorphic to one of the following cosymplectic Lie algebras:
	\begin{enumerate}	
		\item[$\bullet$]$\G_{2.1}\oplus \G_1:$ $(\al,\om)=( e^3,e^{12})$.
		\item[]\qquad\qquad $(\al,\om)=(\la e^3,e^{12}+e^{23})$,\quad $\la\in\R-\{0\}$.
		\item[$\bullet$]$\G_{3.1}:$  $(\al,\om)=(\la e^2,e^{13})$,\quad$\la\in\R-\{0\}$.
		\item[$\bullet$]$\G_{3.4}^{-1}:$ $(\al,\om)=(\la e^3,e^{12})$,\quad $\la>0$.
		\item[$\bullet$]$\G_{3.5}^0:$ $(\al,\om)=(\la e^3,e^{12})$,\quad $\la>0$.
	\end{enumerate}
\end{pr}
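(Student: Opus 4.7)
The strategy is to compute, for each of the four Lie algebras listed in Proposition 4.1, the group $\mathrm{Aut}(\G)$ of Lie-algebra automorphisms, and then let it act on the corresponding family of cosymplectic pairs $(\al,\om)$ by pullback, reducing to the announced canonical forms. The first step, for each $\G$, is to write a generic invertible endomorphism $L$ in the basis $(e_1,e_2,e_3)$ and impose $L[e_i,e_j]=[Le_i,Le_j]$. Structural features of $\G$ make this tractable: for $\G_{2.1}\oplus\G_1$ the derived ideal $\R e_1$ and the centre $\R e_3$ must be preserved, so $Le_1=ae_1$, $Le_3=ce_3$, and the condition $L[e_1,e_2]=Le_1$ forces the $e_2$-coefficient of $Le_2$ to equal $1$. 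For $\G_{3.1}$ (Heisenberg) only the centre $\R e_1$ is fixed, and the action on $\G/\R e_1$ is an arbitrary $\mathrm{GL}_2$-matrix whose determinant equals the scalar by which $e_1$ is multiplied. For $\G_{3.4}^{-1}$ and $\G_{3.5}^0$ the semisimple element $e_3$ acts on $\mathrm{span}(e_1,e_2)$ either hyperbolically (with $\pm 1$ eigenlines, preserved by $L$ up to a swap) or by a rotation (preserved up to the natural complex-conjugation), which again constrains $L$ strongly.

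The second step is, in each case, to compute $L^*e^i$ and $L^*e^{ij}$ and substitute into the generic $(\al,\om)$ supplied by Proposition 4.1. For $\G_{2.1}\oplus\G_1$ this yields $L^*\om=a\,a_{12}\,e^{12}+c\,a_{23}\,e^{23}$ and $L^*\al=(a_2+r a_3)\,e^2+c a_3\,e^3$, where $r$ is the $e_3$-component of $Le_2$. When $a_{23}=0$, scaling $a,c$ and choosing $r$ reduces to $(e^3,e^{12})$; when $a_{23}\neq 0$, the forced choices $a=1/a_{12}$, $c=1/a_{23}$ fix $\om=e^{12}+e^{23}$, after which $r$ kills the $e^2$-part of $\al$ and the coefficient $\la=a_3/a_{23}$ of $e^3$ persists, giving the second normal form. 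Analogous substitutions in the three remaining algebras produce $(\la e^2,e^{13})$ and $(\la e^3,e^{12})$; the additional rotational (respectively sign-swap) symmetries in $\G_{3.5}^0$ and $\G_{3.4}^{-1}$ restrict the residual scalar $\la$ to $\la>0$.

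The third step is to verify that the residual $\la$ really parametrises distinct orbits, so that the list is irredundant (or, conversely, that one can reduce further to a finite set of values when the statement lists $\la>0$). For each normal form $(\la e^k,\om_0)$ I would parametrise the stabiliser of $\om_0$ inside $\mathrm{Aut}(\G)$ and read off the induced action $\la\mapsto\rho\la$ on the remaining coefficient. The main obstacle is the careful bookkeeping of which combinations of automorphism parameters remain free after normalising $\om$, and then showing that the residual action on $\al$ sweeps out exactly the range quoted in the statement. The elliptic case $\G_{3.5}^0$ is the most delicate, since one must first use a planar rotation to remove the mixing between $e^{13}$ and $e^{23}$ in $\om$ before rescaling $\al$; and in the hyperbolic case $\G_{3.4}^{-1}$ the sign-swap of eigenlines must be identified with an explicit automorphism in order to trim $\la\in\R\setminus\{0\}$ down to $\la>0$.
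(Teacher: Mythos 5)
Your proposal follows essentially the same route as the paper: compute $\mathrm{Aut}(\G)$ for each algebra from Proposition 4.1, use it to normalise $\om$ to a canonical $\om_0$, then act the stabiliser of $\om_0$ on $\al$ to trim the residual parameter (the paper carries this out explicitly for $\G_{3.4}^{-1}$, exhibiting the two automorphism families, the normalising map $L$ with $L^*\om=e^{12}$, and the stabiliser elements $L_1,L_2$ with $L_i^*\al=\pm\al$, delegating the remaining cases to Maple). Your worked computation for $\G_{2.1}\oplus\G_1$ and the sign-swap/rotation arguments restricting $\la$ to $\la>0$ match the paper's scheme, so the proposal is correct and not a genuinely different argument.
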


\begin{proof}
	We proceed as follows. We act the automorphisms of $\G$ on $\om$ to find the simplest possible form $\om_0$ then we seek all the automorphisms which transforms $\om$ into $\om_0$ and finally we act these automorphisms on $\al$ in order to simplify it. These calculations are made using computation software Maple 18$^\circledR$.
	
	We will give the proof for  Lie algebra $\G_{4.3}^{-1}$: $[e_1,e_3]=e_1$, $[e_2,e_3]=-e_3$, since all cases must be treated in the same way. Cosymplectic structures in $\G_{4.3}^{-1}$ are given by the following family 
	\begin{align*}
	\al&=a_3e^3\\
	\om&=a_{12}e^{12}+a_{13}e^{13}+a_{23}e^{23},	\qquad a_3a_{12}\not=0.	
	\end{align*}
	In this case the automorphisms  are given  by
	\[T_1=\begin{pmatrix}
	t_{11}&0&t_{13}\\
	0&t_{22}&t_{23}\\ 
	0&0&1\end{pmatrix}\esp T_2=\begin{pmatrix}
	0&t_{12}&t_{13}\\
	t_{21}&0&t_{23}\\ 
	0&0&-1\end{pmatrix}.
	\] 
	
	The  automorphism $L=\begin{pmatrix}
	1&0&\frac {a_{23}}{a_{12}}\\ 0&\frac{1}{a_{12}}&-\frac{a_{13}}{a_{12}}\\ 
	0&0&1
	\end{pmatrix}$,  satisfying that  $L^*(\om)=e^{12}$,
	all automorphisms that satisfy $L^*(\om)=e^{12}$ are
	\[L_1=\begin{pmatrix} t_{{11}}&0&{\frac {a_{{23}}}{a_{{12}}}}\\ 
	0&{\frac{1}{t_{{11}}a_{{12}}}}&-{\frac{a_{{13}}}{a_{{12}}}}\\ 0&0&1\end{pmatrix}\esp L_2=\begin{pmatrix}
	0&t_{{12}}&-{\frac {a_{{23}}}{a_{{12}}}
	}\\-{\frac {1}{t_{{12}}a_{{12}}}}&0&{\frac {a_{13}}{a_{12}}}\\ 0&0&-1
	\end{pmatrix}.\]
	
	A direct calculation gives us $L_1^*(\al)=\al$ and $L_2^*(\al)=-\al$.  So, we can take $\al=\la e^3$ with $\la>0$.
\end{proof}

By  using Proposition \ref{pr2.1}, a direct calculation gives us the following corollary.
\begin{co}
	The left symmetric product associated to the three-dimensional cosymplectic Lie algebras is given by
	\begin{enumerate}	
		\item[$\bullet$]$\G_{2.1}\oplus \G_1:$  $e_1.e_2=e_1$,\, $e_2.e_2=e_2$.  
		\item[$\bullet$] $\G_{3.1}:$ $e_2.e_2=\frac{1}{\la^2}e_3$,\, $e_3.e_2=-e_1$.
		\item[$\bullet$]  $\G_{3.4}^{-1}:$ $e_1.e_2=e_2.e_1=\frac{1}{\la^2} e_3$,\; $e_3.e_1=-e_1$,\,  $e_3.e_2=e_2$.
		\item[$\bullet$] $\G_{3.5}^0:$ $e_1.e_1=\frac{1}{\la^2}e_3$,\, $e_2.e_2=\frac{1}{\la^2}e_3$,\, $e_3.e_1=e_2$,\, $e_3.e_2=-e_1$.
	\end{enumerate}
\end{co}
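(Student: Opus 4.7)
My plan is to specialize Proposition \ref{pr2.1} to each of the four cosymplectic algebras in Proposition \ref{pr 4.2}. For every representative $(\G,\al,\om)$ the argument follows the same three-step template and presents no conceptual obstacle; the content is essentially linear bookkeeping.

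First, I read off the Reeb vector $\xi$ and the ideal $\h=\ker\al$ from the defining equations $\al(\xi)=1$ and $\iota_\xi\om=0$. In each case $\al$ is a scalar multiple of a single dual basis vector, so $\xi$ is the corresponding basis vector rescaled by $\la^{-1}$ (with $\la=1$ in the $\G_{2.1}\oplus\G_1$ entry), and $\h$ is the $2$-plane spanned by the remaining two basis vectors; I simultaneously record the restriction $\om_\h$.

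Second, from the bracket table I compute $\ad_\xi:\h\lr\h$, and from the defining identity $(\ref{4})$, $\om_\h(x*y,z)=-\om_\h(y,[x,z])$, I extract the symplectic left-symmetric product $*$ on $(\h,\om_\h)$. In three of the four cases ($\G_{3.1}$, $\G_{3.4}^{-1}$, $\G_{3.5}^0$) the ideal $\h$ is abelian, whence $*\equiv 0$; in the fourth case $\G_{2.1}\oplus\G_1$ the ideal $\h\cong\aff(1,\R)$ is non-abelian and $*$ is obtained from a short $2\times 2$ linear system, while there $\ad_\xi=0$.

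Third, I assemble the components through Proposition \ref{pr2.1}:
\[x.y=x*y+\om(x,\ad_\xi y)\xi,\qquad \xi.x=\ad_\xi x,\qquad x.\xi=0 \qquad(x,y\in\h).\]
Rewriting back in the basis $\{e_1,e_2,e_3\}$ produces the tabulated formulas. The $\la^{-2}$ factors in the last three cases combine the $\la^{-1}$ coming from $\xi=\la^{-1}e_3$ with the $\la^{-1}$ produced when $\ad_\xi=\la^{-1}\ad_{e_3}$ is applied. The only genuine risk is sign bookkeeping in the terms $\om(x,\ad_\xi y)$, which I would verify by substituting each row back into the original identity $\varPhi(x.y)(z)=-\varPhi(y)([x,z])$ from $(\ref{6})$.
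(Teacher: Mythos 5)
Your proposal is correct and is exactly what the paper does: its entire proof is the sentence ``by using Proposition \ref{pr2.1}, a direct calculation gives the corollary,'' and your three-step scheme (read off $\xi$ and $\h=\ker\al$, note that $\h$ is abelian except for $\G_{2.1}\oplus\G_1$ so that $*$ vanishes there, then assemble $x.y=x*y+\om(x,\ad_\xi y)\xi$, $\xi.x=\ad_\xi x$, $x.\xi=0$) is the natural way to organize that calculation. One caveat worth recording: for $\G_{3.1}$ the normal form of Proposition \ref{pr 4.2} is $(\la e^2,e^{13})$, for which your method gives $\xi=\la^{-1}e_2$ and hence $e_2.e_3=e_1$, $e_3.e_3=-\la^{-2}e_2$, whereas the tabulated entry $e_2.e_2=\la^{-2}e_3$, $e_3.e_2=-e_1$ corresponds to the permuted (isomorphic) representative $(\la e^3,e^{12})$ — an internal inconsistency of the paper rather than a gap in your argument; likewise the $\G_{2.1}\oplus\G_1$ row only covers the first of the two normal forms listed there (for the second, $\iota_{e_3}\om\neq0$, so $\xi$ is not a rescaled basis vector).
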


\subsection{Five-dimensional cosymplectic Lie algebras }

In \cite{C-P} the authors give a classification of five-dimensional cosymplectic Lie algebras, starting from four-dimensional symplectic Lie algebras endowed with a derivation. The disadvantage of this method is that the correspondence with five-dimensional Lie algebras well known in the literature is ignored. We propose here a direct method (case by case), by searching among the $40$ five-dimensional Lie algebras (listed in \cite{P-W}), those which are cosymplectic and we give their structures.

\begin{pr}
	Let $(\G,\al,\om)$ be a five-dimensional  cosymplectic real Lie algebra. Then
	$(\G,\al,\om)$ is isomorphic to one of the following cosymplectic Lie algebras:
	
	\begin{enumerate}	
		\item[$A_{5,1}:$] $[e_3,e_5] = e_1$, $[e_4,e_5]=e_2$  (nilpotent)
		\begin{align*}
		\al&=a_3e^3+a_4e^4+a_5e^5\\
		\om&=a_{13}e^{13}+a_{23}e^{14}+a_{15}e^{15}+a_{23}e^{23}+a_{24}e^{24}+a_{25}e^{25}+a_{34}e^{34}+a_{35}e^{35}+a_{45}e^{45}\\
		&with\qquad a_3a_{15}a_{24}-a_3a_{23}a_{25}+a_4a_{13}a_
		{25}-a_4a_{15}a_{23}-a_5a_{13}a_{24}+a_5{a_
			{23}^{2}}\not=0.	
		\end{align*}
		\item[$A_{5,2}:$] $[e_2,e_5] = e_1$, $[e_3,e_5]=e_2$, $[e_4,e_5] = e_3$ 	(nilpotent)
		\begin{align*}
		\al&=a_4e^4+a_5e^5\\
		\om&=-a_{23}e^{14}+a_{15}e^{15}+a_{23}e^{23}+a_{25}e^{25}+a_{34}e^{34}+a_{35}e^{35}+a_{45}e^{45}\\	
		&with\qquad \qquad a_{23}(a_4a_{15}+a_5a_{23})\not=0.
		\end{align*}
		\item[$A_{5,5}:$] $[e_2,e_5] = e_1$, $[e_3,e_4]=e_1$, $[e_3,e_5] = e_2$ 	(nilpotent)	
		\begin{align*}
		\al&=a_3e^3+a_4e^4+a_5e^5\\
		\om&=a_{24}e^{15}+a_{23}e^{23}+a_{24}e^{24}+a_{25}e^{25}+a_{34}e^{34}+a_{35}e^{35}+a_{45}e^{45}\\
		&with\qquad a_{24}(a_3a_{24}-a_4a_{23}) \not=0.	
		\end{align*}		
		
		\item[$A_{5,6}:$] $[e_2,e_5] = e_1$, $[e_3,e_5]=e_2$, $[e_3,e_4]=e_1$, $[e_4,e_5] = e_3$ 	(nilpotent)
		\begin{align*}
		\al&=a_4e^4+a_5e^5\\
		\om&=-a_{23}e^{14}+a_{24}e^{15}+a_{23}e^{23}+a_{24}e^{24}+a_{25}e^{25}+a_{34}e^{34}+a_{35}e^{35}+a_{45}e^{45}\\	&with \qquad a_{23}(a_4a_{24}+a_5a_{23})\not=0.
		\end{align*}
		\item[$A^{a,-a,-1}_{5,7}:$] $[e_1, e_5] = e_1$, $[e_2,e_5]=ae_2$, $[e_3, e_5] =-ae_3$, $[e_4, e_5]=-e_4$,\quad $a\in \left] -1,0 \right[  \cup \left]0,1 \right[$	.
		\begin{align*}
		\al&=a_5e^5\\
		\om&=a_{14}e^{14}+a_{15}e^{15}+a_{23}e^{23}+a_{24}e^{24}+a_{25}e^{25}+a_{34}e^{34}+a_{35}e^{35}+a_{45}e^{45}\\	&with \qquad a_5a_{14}a_{23}\not=0.
		\end{align*}
		
		\item[$A^{1,-1,-1}_{5,7}:$] $[e_1, e_5] = e_1$, $[e_2,e_5]=e_2$,  $[e_3, e_5] =-e_3$, $[e_4, e_5]=-e_4$	
		\begin{align*}
		\al&=a_5e^5\\
		\om&=a_{13}e^{13}+a_{14}e^{14}+a_{15}e^{15}+a_{23}e^{23}+a_{24}e^{24}+a_{25}e^{25}+a_{35}e^{35}+a_{45}e^{45}\\	&with \qquad a_5(a_{24}a_{13}-a_{14}a_{2 3})\not=0.
		\end{align*}
		\item[$A^{-1}_{5,8}:$]$[e_2, e_5] = e_1$,$[e_3, e_5] =e_3$, $[e_4, e_5]=-e_4$	 	
		\begin{align*}
		\al&=a_2e^2+a_5e^5\\
		\om&=a_{12}e^{12}+a_{15}e^{15}+a_{25}e^{25}+a_{34}e^{34}+a_{35}e^{35}+a_{45}e^{45}\\	&with \qquad a_{34}(a_5a_{12}-a_2a_{15}) \not=0.
		\end{align*}
		\item[$A^{0,-1}_{5,9}:$]$[e_1, e_5] = e_1$, $[e_2,e_5]=e_1+e_2$, $[e_4, e_5] =-e_4$	 	
		\begin{align*}
		\al&=a_3e^3+a_5e^5\\
		\om&=a_{15}e^{15}+a_{24}e^{24}+a_{25}e^{25}+a_{35}e^{35}+a_{45}e^{45}\\	&with \qquad a_3a_{15}a_{24} \not=0.
		\end{align*}
		\item[$A^{-1,0,q}_{5,13}:$]$[e_1, e_5] = e_1$, $[e_2,e_5]=-e_2$, $[e_3, e_5] =-qe_4$, $[e_4, e_5]=qe_3$	 	
		\begin{align*}
		\al&=a_5e^5\\
		\om&=a_{12}e^{12}+a_{15}e^{15}+a_{25}e^{25}+a_{34}e^{34}+a_{35}e^{35}+a_{45}e^{45}\\	&with \qquad a_5a_{12}a_{34} \not=0.
		\end{align*}
		
		\item[$A^{0}_{5,14}:$]$[e_2, e_5] = e_1$, $[e_3, e_5] =-e_4$, $[e_4, e_5]=e_3$	 	
		\begin{align*}
		\al&=a_2e^2+a_5e^5\\
		\om&=a_{12}e^{12}+a_{15}e^{15}+a_{25}e^{25}+a_{34}e^{34}+a_{35}e^{35}+a_{45}e^{45}\\	&with \qquad a_{34}(a_5a_{12}-a_2a_{15}) \not=0.
		\end{align*}
		\item[$A^{-1}_{5,15}:$]$[e_1, e_5] = e_1,[e_2, e_5] = e_1+e_2$, $[e_3, e_5] =-e_3$, $[e_4, e_5]=e_3-e_4$	 	
		\begin{align*}
		\al&=a_5e^5\\
		\om&=-a_{23}e^{14}+a_{15}e^{15}+a_{23}e^{23}+a_{24}e^{24}+a_{25}e^{25}+a_{35}e^{35}+a_{45}e^{45}\\	&with \qquad a_5a_{23} \not=0.
		\end{align*}
		\item[$A^{1,p,-p}_{5,17}:$] $[e_1, e_5] = pe_1-e_2$, $[e_2,e_5]=e_1+pe_2$, $[e_3, e_5] =-pe_3-e_4$, $[e_4, e_5]=e_3-pe_4$,$\quad p \not=0$.		
		\begin{align*}
		\al&=a_5e^5\\
		\om&=a_{24}e^{13}-a_{23}e^{14}+a_{15}e^{15}+a_{23}e^{23}+a_{24}e^{24}+a_{25}e^{25}+a_{35}e^{35}+a_{45}e^{45}\\	&with \qquad a_5(a^{2}_{23}+a^{2}_{2 4})\not=0.
		\end{align*}
		\item[$A^{1,0,0}_{5,17}:$] $[e_1, e_5] = -e_2$, $[e_2,e_5]=e_1$, $[e_3, e_5] =-e_4$, $[e_4, e_5]=e_3$	
		\begin{align*}
		\al&=a_5e^5\\
		\om&=a_{12}e^{12}+a_{24}e^{13}-a_{23}e^{14}+a_{15}e^{15}+a_{23}e^{23}+a_{24}e^{24}+a_{25}e^{25}+a_{34}e^{34}+a_{35}e^{35}+a_{45}e^{45}\\	&with \qquad a_5(a_{12}a_{34}-a^{2}_{23}-a^{2}_{24})\not=0.
		\end{align*}
		\item[$A^{-1,p,-p}_{5,17}:$] $[e_1, e_5] = pe_1-e_2$, $[e_2,e_5]=e_1+pe_2$, $[e_3, e_5] =-pe_3+e_4$, $[e_4, e_5]=-e_3-pe_4$,$\quad p \not=0$.		
		\begin{align*}
		\al&=a_5e^5\\
		\om&=-a_{24}e^{13}+a_{23}e^{14}+a_{15}e^{15}+a_{23}e^{23}+a_{24}e^{24}+a_{25}e^{25}+a_{35}e^{35}+a_{45}e^{45}\\	&with \qquad a_5(a^{2}_{23}+a^{2}_{2 4})\not=0.
		\end{align*}	
		\item[$A^{-1,0,0}_{5,17}:$] $[e_1, e_5] = -e_2$, $[e_2,e_5]=e_1$, $[e_3, e_5] =e_4$, $[e_4, e_5]=-e_3$	
		\begin{align*}
		\al&=a_5e^5\\
		\om&=a_{12}e^{12}-a_{24}e^{13}+a_{23}e^{14}+a_{15}e^{15}+a_{23}e^{23}+a_{24}e^{24}+a_{25}e^{25}+a_{34}e^{34}+a_{35}e^{35}+a_{45}e^{45}\\	&with \qquad a_5(a_{12}a_{34}+a^{2}_{23}+a^{2}_{24})\not=0.
		\end{align*}
		\item[$A^{s,0,0}_{5,17}:$] $[e_1, e_5] = -e_2$, $[e_2,e_5]=e_1$, $[e_3, e_5] =-se_4$, $[e_4, e_5]=se_3$,$\qquad s\not\in\{-1,0,1\}$.	
		\begin{align*}
		\al&=a_5e^5\\
		\om&=a_{12}e^{12}+a_{15}e^{15}+a_{25}e^{25}+a_{34}e^{34}+a_{35}e^{35}+a_{45}e^{45}\\	&with \qquad a_5a_{12}a_{34}\not=0.
		\end{align*}
		\item[$A^{0}_{5,18}:$] $[e_1, e_5] = -e_2$, $[e_2,e_5]=e_1$, $[e_3, e_5] =e_1-e_4$, $[e_4, e_5]=e_2+e_3$	
		\begin{align*}
		\al&=a_5e^5\\
		\om&=a_{24}e^{13}+a_{15}e^{15}+a_{24}e^{24}+a_{25}e^{25}+a_{34}e^{34}+a_{35}e^{35}+a_{45}e^{45}\\	&with \qquad a_5a_{24}\not=0.
		\end{align*}
		\item[$A^{1,-1}_{5,19}:$] $[e_1, e_5] = e_1$, $[e_2, e_3] = e_1$, $[e_2,e_5]=e_2$,  $[e_4, e_5]=-e_4$	
		\begin{align*}
		\al&=a_3e^3+a_5e^5\\
		\om&=a_{23}e^{15}+a_{23}e^{23}+a_{24}e^{24}+a_{25}e^{25}+a_{35}e^{35}+a_{45}e^{45}\\	&with \qquad a_3a_{23}a_{24}\not=0.
		\end{align*}	
		\item[$A^{\frac{1}{2},-1}_{5,19}:$] $[e_1, e_5] = \frac{1}{2}e_1$, $[e_2, e_3] = e_1$, $[e_2,e_5]=e_2$, $[e_3, e_5] =-\frac{1}{2}e_3$, $[e_4, e_5]=-e_4$	
		\begin{align*}
		\al&=a_5e^5\\
		\om&=a_{13}e^{13}+a_{15}e^{15}+2a_{15}e^{23}+a_{24}e^{24}+a_{25}e^{25}+a_{35}e^{35}+a_{45}e^{45}\\	&with \qquad a_5a_{13}a_{24}\not=0.
		\end{align*}
		\item[$A^{-1,2}_{5,19}:$] $[e_1, e_5] = -e_1$, $[e_2, e_3] = e_1$, $[e_2,e_5]=e_2$, $[e_3, e_5] =-2e_3$, $[e_4, e_5]=2e_4$	
		\begin{align*}
		\al&=a_5e^5\\
		\om&=a_{12}e^{12}-a_{23}e^{15}+a_{23}e^{23}+a_{25}e^{25}+a_{34}e^{34}+a_{35}e^{35}+a_{45}e^{45}\\	&with \qquad a_5a_{12}a_{34}\not=0.
		\end{align*}
		\item[$A^1_{5,30}:$]$[e_2, e_4] = e_1$, $[e_3, e_4] = e_2$, $[e_1, e_5] = 2e_1$, $[e_2,e_5]=e_2$, $[e_4, e_5]=e_4$ 	
		\begin{align*}
		\al&=a_3e^3+a_5e^5\\
		\om&=2a_{24}e^{15}+a_{24}e^{24}+a_{34}e^{25}+a_{34}e^{34}+a_{35}e^{35}+a_{45}e^{45}\\	&with \qquad a_3a_{24}\not=0.
		\end{align*}
		\item[$A^{0,-1}_{5,33}:$] $[e_1, e_4] = e_1$, $[e_2,e_5]=e_2$, $[e_3, e_4] =-e_3$
		\begin{align*}
		\al&=a_4e^4+a_5e^5\\
		\om&=a_{13}e^{13}+a_{14}e^{14}+a_{25}e^{25}+a_{34}e^{34}+a_{45}e^{45}\\	&with \qquad a_4a_{13}a_{25}\not=0.
		\end{align*}
		\item[$A^{-1,0}_{5,33}:$] $[e_1, e_4] = e_1$, $[e_2,e_5]=e_2$, $[e_3, e_5] =-e_3$
		\begin{align*}
		\al&=a_4e^4+a_5e^5\\
		\om&=a_{14}e^{14}+a_{23}e^{23}+a_{25}e^{25}+a_{35}e^{35}+a_{45}e^{45}\\	&with \qquad a_5a_{14}a_{23}\not=0.
		\end{align*}
		\item[$A_{5,36}:$] $[e_1,e_4]=e_1$, $[e_2, e_3] = e_1$,  $[e_2, e_4] =e_2$, $[e_2, e_5] =-e_2$, $[e_3, e_5] =e_3$
		\begin{align*}
		\al&=a_4e^4+a_5e^5\\
		\om&=a_{23}e^{14}+a_{23}e^{23}-a_{25}e^{24}+a_{25}e^{25}+a_{35}e^{35}+a_{45}e^{45}\\	&with \qquad a_5a_{23}\not=0.
		\end{align*}
		\item[$A_{5,37}:$] $[e_1,e_4]=2e_1$, $[e_2, e_3] = e_1$,  $[e_2, e_4] =e_2$, $[e_3, e_4] =e_3$, $[e_2, e_5] =-e_3$, $[e_3, e_5] =e_2$
		\begin{align*}
		\al&=a_4e^4+a_5e^5\\
		\om&=2a_{23}e^{14}+a_{23}e^{23}+a_{35}e^{24}-a_{34}e^{25}+a_{34}e^{34}+a_{35}e^{35}+a_{45}e^{45}\\	&with \qquad a_5a_{23}\not=0.
		\end{align*}			
	\end{enumerate}
	
\end{pr}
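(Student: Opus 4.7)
The plan is to proceed by a direct case-by-case analysis over the $40$ isomorphism classes of real five-dimensional Lie algebras listed in \cite{P-W}. For each such $\G$, we fix the basis $\{e_1,\ldots,e_5\}$ realizing the given bracket table and write the most general $1$-form and $2$-form on $\G$:
\[\al=\sum_{i=1}^{5}a_i\,e^i\esp \om=\sum_{1\le i<j\le 5}a_{ij}\,e^{ij}.\]
The cosymplectic conditions stated in the introduction then translate into: (i) the cocycle condition $\md\al=0$, equivalent to $\al([e_i,e_j])=0$ for every pair of generators, which gives linear relations on the $a_i$'s; (ii) the cocycle condition $\md\om=0$, equivalent to $\oint\om([e_i,e_j],e_k)=0$ for every triple $i<j<k$, giving linear relations on the $a_{ij}$'s; (iii) the volume-form condition $\al\we\om^2\neq 0$, which is a single polynomial inequation in the coefficients $(a_i,a_{ij})$.

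For each algebra in the Patera--Winternitz list, one solves the homogeneous linear system produced by (i) and (ii), obtains the general form of $(\al,\om)$ satisfying both cocycle conditions, and then computes $\al\we\om^2$ explicitly as an element of $\we^5\G^*\simeq\R$. If the resulting polynomial can be made nonzero by a suitable choice of parameters, then $\G$ supports a cosymplectic structure and we record the corresponding general family $(\al,\om)$ together with the explicit non-degeneracy condition. Otherwise (if either the cocycle system forces too many vanishings or the resulting $\al\we\om^2$ is identically zero), $\G$ is discarded. The list in the statement is then the complete enumeration of the admitting cases.

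Two sanity checks should accompany the computation. First, by Lemma~1.1 any cosymplectic $\G$ contains a codimension-one ideal $\h=\ker\al$ carrying a four-dimensional symplectic structure, so a Lie algebra $\G$ whose every codimension-one ideal fails to admit a symplectic form can be eliminated immediately; this gives a useful preliminary filter that drastically shortens the treatment of semidirect and nonunimodular cases. Second, the classification should be cross-checked against the list obtained in \cite{C-P} starting from four-dimensional symplectic Lie algebras equipped with an i.s.t. derivation (Proposition~\ref{pr 3.3}): each entry of our list must correspond to exactly one such pair $(\h,\omb,D)$.

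The main obstacle is purely the volume of the computation: $40$ algebras, each requiring a cocycle analysis on up to $5$ coefficients of $\al$ and $10$ coefficients of $\om$, together with the explicit expansion of $\al\we\om^2$. Carrying this out by hand is error-prone, so the calculations are performed in Maple~18$^\circledR$ as already indicated in the introduction; the role of the proof is therefore to organize the case distinctions and to present the output in a human-readable normal form, using only elementary linear algebra to simplify each resulting family of $(\al,\om)$.
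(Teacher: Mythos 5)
Your proposal matches the paper's method exactly: the authors also proceed case by case through the $40$ five-dimensional Lie algebras of Patera--Winternitz, imposing the $1$-cocycle and $2$-cocycle conditions on a general pair $(\al,\om)$ and then checking non-degeneracy (via the rank of $\varPhi$, equivalent to your $\al\we\om^2\neq 0$), with the computations delegated to Maple. No substantive difference.
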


Unlike the three-dimensional Heisenberg algebra, the five-dimensional Heisenberg algebra does not support any cosymplectic structure. Therefore we have the following most general result.
\begin{pr}
	Let $\mathrm{H}_{2n+1}$ be the ($2n+1$)-dimensional Heisenberg Lie algebra   generated by elements $\{e_i,f_i,z\}_{\{1\leq i\leq n\}}$,  with the relations: $[e_i,f_i]=z$. Then  $\mathrm{H}_{2n+1}$ supports a cosymplectic structure if and only if $n=1$.
\end{pr}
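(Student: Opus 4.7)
The plan is to handle the easy direction first and then find the obstruction for $n\geq 2$ by exploiting the rigidity of the center $\langle z\rangle$ under the cocycle conditions.

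For the direction $n=1\Rightarrow$ cosymplectic: the three-dimensional Heisenberg algebra $\mathrm{H}_3$ is nothing but the Lie algebra $\G_{3.1}$ of Proposition \ref{pr 4.2}, for which the pair $(\al,\om)=(\la e^2,e^{13})$ (with $\la\neq 0$) is already exhibited as a cosymplectic structure. So this direction is immediate by citing the classification.

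For $n\geq 2\Rightarrow$ no cosymplectic structure exists, I would argue by contradiction. Suppose $(\alpha,\omega)$ is a cosymplectic structure on $\mathrm{H}_{2n+1}$. First, since $[\mathrm{H}_{2n+1},\mathrm{H}_{2n+1}]=\langle z\rangle$, condition~1 in the definition of cosymplectic Lie algebra forces $\alpha(z)=0$. The key step is then to show that also $\iota_z\omega=0$, so that $\varPhi(z)=\iota_z\omega+\alpha(z)\alpha=0$ while $z\neq 0$, contradicting the fact that $\varPhi$ must be an isomorphism.

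To establish $\iota_z\omega=0$, I would evaluate the $2$-cocycle identity for $\omega$ on carefully chosen triples. Fix an index $k$ and pick any $i\neq k$, which is possible precisely because $n\geq 2$. Applying condition~2 to the triple $(e_i,f_i,e_k)$ gives
\begin{equation*}
\omega([e_i,f_i],e_k)+\omega([f_i,e_k],e_i)+\omega([e_k,e_i],f_i)=0.
\end{equation*}
The last two brackets vanish (the only non-zero brackets are $[e_j,f_j]=z$), and $[e_i,f_i]=z$, so we get $\omega(z,e_k)=0$. The same argument with the triple $(e_i,f_i,f_k)$ yields $\omega(z,f_k)=0$. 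Since this holds for every $k$, and trivially $\omega(z,z)=0$, we conclude $\iota_z\omega=0$, which completes the contradiction.

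The only subtle point is that the argument requires an auxiliary index $i\neq k$ for both the $e$ and $f$ families, which is why the obstruction appears exactly when $n\geq 2$; in the case $n=1$ the absence of such an auxiliary index is what allows $\omega(z,e_1)$ and $\omega(z,f_1)$ to stay non-zero, and the cosymplectic structure to exist. There is no real technical obstacle here: the whole proof is a one-line dimension count (for the easy direction) plus two applications of the cocycle condition (for the hard direction).
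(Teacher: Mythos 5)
Your proof is correct and follows essentially the same route as the paper: both handle $n=1$ by citing the three-dimensional classification ($\mathrm{H}_3=\G_{3.1}$), and for $n\geq 2$ both derive $\al(z)=0$ from the vanishing of $\al$ on the derived algebra and use the $2$-cocycle identity on a triple involving an auxiliary index (possible only when $n\geq2$) to force $\iota_z\om=0$, so that $\varPhi(z)=0$ contradicts non-degeneracy. Your write-up is in fact slightly more explicit than the paper's about why the contradiction arises and why the argument breaks down at $n=1$.
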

\begin{proof} Proposition \ref{pr 4.2} shows that $\mathrm{H}_3=\G_{3.1}$ admits cosymplectic structures. Let $n \geq2$, and $\al\in\mathrm{H}^*_{2n+1}$, $\om\in\wedge^2\mathrm{H}^*_{2n+1}$  be a $1$-cocycle  and $2$-cocycle respectively. On the one hand by using the Maurer-Cartan equations of $\mathrm{H}_{2n+1}$ we get $\al(z)=0$, on the other hand for all $e_i$, $e_j$, $f_j$,\, $j\not=i$  we have  
	\begin{align*}
	\om(e_i,z)&=\om(e_i,[e_j,f_j])\\
	&=	\oint\om(e_i,[e_j,f_j])=0,
	\end{align*}
	in the same way, we find $\om(f_i,z)=0$ for all $1\leq i\leq n$. It follows that
	$\varPhi(e_i,z)=\varPhi(f_i,z)=0$ for all $1\leq i\leq n$. 
\end{proof}

Set by $\aff(2,\R)$ the affine  Lie algebra,  generated by elements $\{e_1,...,e_6\}$,  with the relations:
\[\begin{array}{lllll}
[e_1,e_3]=-e_1,&[e_2,e_4]=-e_1,&[e_3,e_4]=e_4,&[e_4,e_5]=e_3-e_6,&[e_5,e_6]=-e_5,\\
{[e_1,e_5]=-e_2},&[e_2,e_6]=-e_2,&[e_3,e_5]=-e_5,&[e_4,e_6]=e_4.&
\end{array}\]
\begin{pr}
	The only non-solvable cosymlectic Lie algebra of dimension less than or equal to seven is isomorphim to
	\[(\aff(2,\R)\ltimes\langle e_7 \rangle,\al,\om),\]
	the Lie brackets are given by that of $\aff(2,\R)$, to which we add this new entry: $[e_7,e_6]=\lambda e_2$, $[e_7,e_4]= \lambda e_1$, $\lambda\in\R$ and the cosymlectic structure is $\al=e^7$, $\om=e^{15}+e^{26}+e^{34}+e^{46}$. 
\end{pr}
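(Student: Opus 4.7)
The plan is to reduce the statement to dimension seven using the low-dimensional classifications already established, and then to analyze the seven-dimensional case via a Levi decomposition combined with the symplectic structure on $\h$ supplied by Lemma~2.1. Since a cosymplectic Lie algebra has odd dimension, only $1,3,5,7$ are relevant; dimension $1$ is abelian, and by inspection of Propositions~4.1, 4.2 and the five-dimensional classification above, every cosymplectic Lie algebra of dimension $3$ or $5$ is solvable (each listed algebra is abelian, nilpotent or of solvable Bianchi type). Hence the entire statement reduces to the case $\dim\G=7$.

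Now let $(\G,\al,\om)$ be a $7$-dimensional non-solvable cosymplectic Lie algebra and write $\G=\mathfrak{s}\ltimes\mathfrak{r}$ for a Levi decomposition with $\mathfrak{s}\ne 0$ semisimple. Since $\al$ is a $1$-cocycle and $\mathfrak{s}=[\mathfrak{s},\mathfrak{s}]$, we have $\mathfrak{s}\subset\h=\ker\al$, and by Lemma~2.1 the pair $(\h,\om_\h)$ is a $6$-dimensional symplectic Lie algebra. A key observation is that no semisimple Lie algebra carries a symplectic form: every $2$-cocycle on a semisimple Lie algebra is a coboundary $\om_f(x,y)=-f([x,y])$, and by invariance of the Killing form the element dual to $f$ always lies in the radical of $\om_f$. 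This rules out $\h=\mathfrak{s}$, hence $\dim\mathfrak{s}\ne 6$, and since no real semisimple Lie algebra has dimension $4$ or $5$, one is left with $\dim\mathfrak{s}=3$, that is $\mathfrak{s}\cong\Sl(2,\R)$ or $\mathfrak{su}(2)$, and a solvable radical of dimension $4$.

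The crucial step is to classify the $6$-dimensional symplectic Lie algebras $\h=\mathfrak{s}\ltimes\mathfrak{m}$, with $\mathfrak{m}=\h\cap\mathfrak{r}$ a $3$-dimensional $\mathfrak{s}$-stable solvable subalgebra. I would decompose $\mathfrak{m}$ into irreducible $\mathfrak{s}$-summands (the possibilities being $V_0^{\oplus 3}$, $V_0\oplus V_1$ or the adjoint $V_2$ for $\Sl(2,\R)$; and $V_0^{\oplus 3}$ or the $3$-dimensional $\mathfrak{so}(3)$-representation for $\mathfrak{su}(2)$), and apply the cocycle identity with one slot in $\mathfrak{s}$ and two in $\mathfrak{m}$. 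Combined with Whitehead's first lemma, which forces the $1$-cocycle $\mathfrak{s}\to\mathfrak{m}^*$ extracted from $\om_\h(\cdot,\cdot)|_{\mathfrak{s}\times\mathfrak{m}}$ to be a coboundary, one shows that $\om_\h$ is degenerate in every case except $\mathfrak{s}=\Sl(2,\R)$ with $\mathfrak{m}\cong V_0\oplus V_1$ and the non-trivial internal bracket of $\mathfrak{m}$ given by the identity action of the $V_0$-line on $V_1$. This pins down $\h\cong\aff(2,\R)=\mathrm{gl}(2,\R)\ltimes\R^2$, and in an adapted basis the symplectic form takes the announced shape $\om_\h=e^{15}+e^{26}+e^{34}+e^{46}$.

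It only remains to lift $\h$ to $\G=\h\oplus\R\xi$. By Proposition~3.3 the Reeb derivation $\ad_\xi$ must be an infinitesimal symplectic transformation of $(\h,\om_\h)$; a direct computation of the symplectic derivations of $\aff(2,\R)$ preserving $\om_\h$, followed by absorption of inner derivations into changes of basis of $\h$, leaves exactly the one-parameter family $[e_7,e_4]=\la e_1$, $[e_7,e_6]=\la e_2$, $\la\in\R$. The main obstacle is the case analysis in the previous paragraph, particularly the elimination of $\mathfrak{s}=\mathfrak{su}(2)$ and of the non-$\aff(2,\R)$ possibilities for $\Sl(2,\R)$; once $\h\cong\aff(2,\R)$ has been pinned down, the seven-dimensional conclusion follows essentially immediately.
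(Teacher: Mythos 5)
Your overall architecture coincides with the paper's: reduce everything to dimension seven, use the fact that $\h=\ker\al$ is a six-dimensional non-solvable symplectic ideal, identify $(\h,\om_\h)$ with $(\aff(2,\R),e^{15}+e^{26}+e^{34}+e^{46})$, and then determine which derivations of $\h$ can serve as $\ad_\xi$ via the i.s.t.\ condition of Proposition \ref{pr 3.3}. The differences are in how the two intermediate inputs are obtained. For dimension five, the paper does not invoke the case-by-case list: it observes that every four-dimensional symplectic Lie algebra is solvable, so $\h$ would be a solvable ideal of codimension one and $\G$ would be solvable — an argument that is independent of the completeness of the enumeration in Section 4 (your appeal to that enumeration is acceptable but mildly circular in spirit, since the list was compiled by a search whose exhaustiveness is exactly what you are leaning on). For the final step the paper uses that every derivation of $\aff(2,\R)$ is inner, writes $D=\ad_x$, and combines the i.s.t.\ condition with the $2$-cocycle identity to get $\om_0(x,[y,z])=0$ for all $y,z$, which forces $x=\la e_2$; this is cleaner and more convincing than your ``absorption of inner derivations into changes of basis,'' which as phrased would seem to kill the whole one-parameter family (an inner $D$ can be absorbed at the level of the Lie bracket, but not while preserving $\al$ and $\om$, which is why the parameter $\la$ survives).

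The genuine gap is in your ``crucial step.'' The paper simply cites the known classification that the unique non-solvable six-dimensional symplectic Lie algebra is $(\aff(2,\R),\om_0)$; you propose to re-derive it via a Levi decomposition $\h=\mathfrak{s}\ltimes\mathfrak{m}$, Weyl complete reducibility of the three-dimensional radical, the Whitehead lemmas, and a degeneracy argument eliminating $\mathfrak{su}(2)$ and all $\Sl(2,\R)$-module types other than $V_0\oplus V_1$ with the identity action. The skeleton is sound (including your correct observation that a semisimple Lie algebra admits no left-invariant symplectic form, which rules out $\dim\mathfrak{s}=6$), but the decisive eliminations are asserted, not performed: you never actually verify that $\om_\h$ degenerates on $\Sl(2,\R)\ltimes V_2$, on $\mathfrak{s}\oplus(\text{3-dim solvable})$, or in the $\mathfrak{su}(2)$ cases, nor that the $V_0$-action on $V_1$ must be the identity rather than an arbitrary nonzero scalar or a non-semisimple endomorphism, nor that the resulting symplectic form can be normalized to $e^{15}+e^{26}+e^{34}+e^{46}$. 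As written, the heart of the proof is a plan; either carry out that case analysis in full or, as the paper does, cite the classification of non-solvable six-dimensional symplectic Lie algebras directly.
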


\begin{proof}
	Let $(\G,\al,\om)$ be a five-dimensional cosymplectic non-solvable Lie algebra, it is well known that (see \cite{M-R}) any four-dimensional symplectic Lie algebra is solvable, by  Proposition \ref{pr 3.3}, we deduce that $\G$ contains an ideal of codimension $1$, this contradicts the fact that $\G$ is non-solvable.

	Now let $(\G,\al,\om)$ be a seven-dimensional cosymplectic non-solvable Lie algebra, by  Proposition \ref{pr 3.3}, $(\h,\om_\h)$ is a six-dimensional non-solvable symplectic Lie algebra. On the one hand, it is known that (see \cite{B-M}) the only non-solvable six-dimensional Lie algebra is (up to isomorphism) $(\aff(2,\R),\om_0)$ with		
	\[\om_0=e^{15}+e^{26}+e^{34}+e^{46}.\]		
	On the other  hand, in $\aff(2,\R)$  any derivation $D$  is inner (see \cite{D-M}), then there exist $x\in\h$ such that  $D=\ad_x$. The $2$-cocycle condition  of $\om_0$ gives that  
	\[\om_0(\ad_xy,z)+\om_0(y,\ad_xz)=\om_0(x,[y,z]),\]
	since $D$ is an i.s.t, it follows that, $\om_0(x,[y,z])=0$, $\forall y,z\in\h$,  by direct  computation we find that $x=\lambda e_2$, $\lambda\in\R$, by Proposition \ref{pr 3.3} we completes the proof.
\end{proof}	
\begin{remark}
	A five-dimensional cosymplectic Lie algebra is necessarily solvable.
\end{remark}

\end{document}